\documentclass [11pt,twoside]{amsart}
\pagestyle{plain}
\linespread{1}
\usepackage{setspace}
\usepackage {amsfonts}
\usepackage {amsmath}
\usepackage {amsthm}
\usepackage {amssymb}
\usepackage {framed}
\usepackage {amsxtra}
\usepackage {enumerate}
\usepackage {graphicx}
\usepackage{color}
\usepackage{graphicx}
\usepackage[left=1.6in, right=1.6 in, top=1.0 in, footskip=0.5 in, bottom=1.0 in]{geometry}
\makeatletter


\theoremstyle{definition}
\newtheorem{df}{Definition} [section]

\theoremstyle{plain}
\newtheorem{thm}[df]{Theorem}

\newtheorem{lemma}[df]{Lemma}
\newtheorem{cor}[df]{Corollary}

\newtheorem{problem}[df]{Problem}

\title{Grid Dissections of Tangential Quadrilaterals}

\author{Erica Choi}
\address{Blair Academy, Blairstown, NJ 07825}
\email{choie@blair.edu}

\author{Dan Ismailescu}
\address{Mathematics Department, Hofstra University, Hempstead, NY 11549}
\email{dan.p.ismailescu@hofstra.edu}

\author{Jiho Lee}
\address{Canterbury School, New Milford, CT 06776}
\email{jhlee00502@gmail.com}

\author{Joonsoo Lee}
\address{Dwight Englewood School, Englewood, NJ 07631}
\email{jlee20@d-e.org}

\begin{document}

\begin{abstract}
For any integer $n\ge 2$, a square can be partitioned into $n^2$ smaller squares
via a {\it checkerboard-type} dissection.
Does there such a \emph{class preserving grid dissection} exist for some other types of quadrilaterals?
For instance, is it true that a tangential quadrilateral can be partitioned into $n^2$ smaller tangential quadrilaterals using 
an $n\times n$ grid dissection? We prove that the answer is affirmative for every integer $n\ge 2$.
\end{abstract}

\maketitle

\begin{section}{\bf The problem}

For any integer $n\ge 2$, a square can be naturally partitioned into $n^2$ smaller squares
via a {\it checkerboard-type} dissection. Are there any other properties $\mathcal{P}$ such that any convex quadrilateral
$Q$ having property $\mathcal{P}$ can be dissected in a {\it grid-like} manner into smaller quadrilaterals, all of which have property $\mathcal{P}$ as well?

The answer is obvious if property $\mathcal{P}$ says ``{\it $Q$ is a parallelogram}" or ``{\it $Q$ is a trapezoid}"
or ``{\it $Q$ is a convex quadrilateral}".

What if property $\mathcal{P}$ states: ``{\it $Q$ is a \emph{cyclic} quadrilateral}" or
''{\it $Q$ is a \emph{tangential} quadrilateral}" or ``{\it $Q$ is an \emph{orthodiagonal} quadrilateral}"?

For instance, figure \ref{fig1} displays the case of a tangential quadrilateral partitioned into nine smaller
tangential quadrilaterals. Is such a dissection always possible? And if it is, how do we find it?

\begin{figure}[!htb]
\centering
\includegraphics[scale=1]{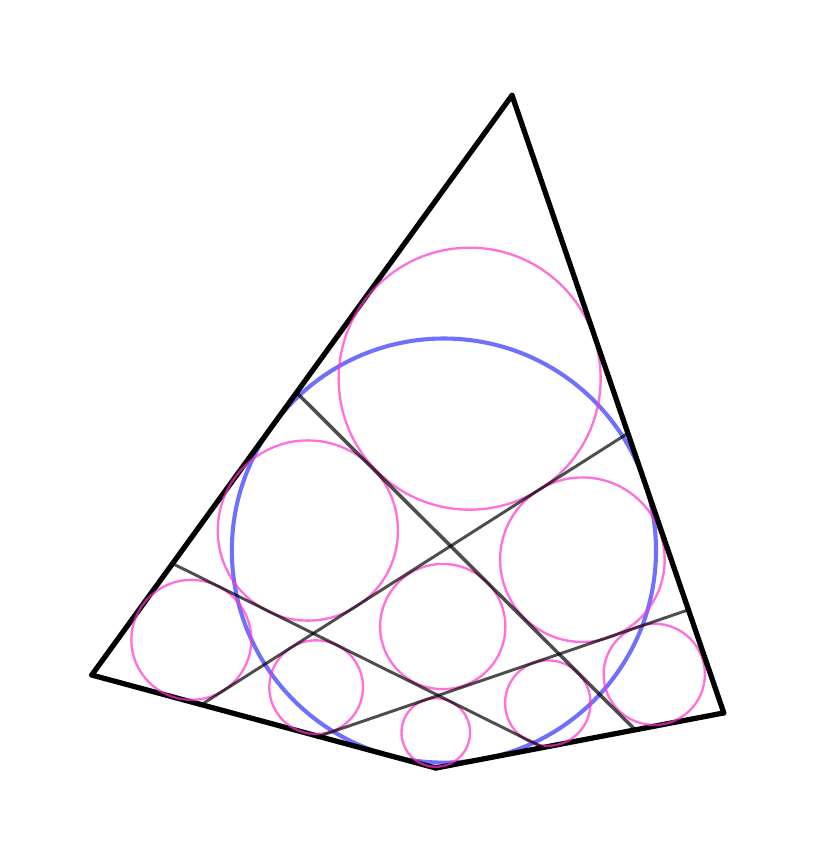}
\vspace{-1cm}
\caption{\small A $3\times 3$ grid dissection of a tangential quadrilateral into smaller tangential quadrilaterals}
\label{fig1}
\end{figure}
\end{section}

\begin{section}{\bf Class preserving grid dissections of quadrilaterals}\label{defback}

\begin{df}
Let $ABCD$ be a convex quadrilateral and let $m$, $n$ be two positive
integers. Consider two sets of segments $\mathcal{S}=\{s_1,\,s_2,\ldots,\,s_{m-1}\}$
and $\mathcal{T}=\{t_1,\,t_2,\ldots,\,t_{n-1}\}$ with the following properties:

a) If $s\in \mathcal{S}$ then the endpoints of $s$ belong to the sides $AB$ and $CD$.
Similarly, if $t\in \mathcal{T}$ then the endpoints of $t$ belong to the sides $AD$ and $BC$.

b) Every two segments in $\mathcal{S}$ are pairwise disjoint and the same is true for the segments in $\mathcal{T}$.

\noindent We then say that segments  $s_1,\,s_2,\ldots,s_{m-1},\,t_1,\,t_2,\,\ldots, t_{n-1}$ define an $m\times n$
{\it grid dissection} of $ABCD$ -- see figure \ref{fig4}. Note that either $\mathcal{S}$ or $\mathcal{T}$ could be empty.
\end{df}
\begin{figure}[!htb]
\centering
\includegraphics[scale=0.7]{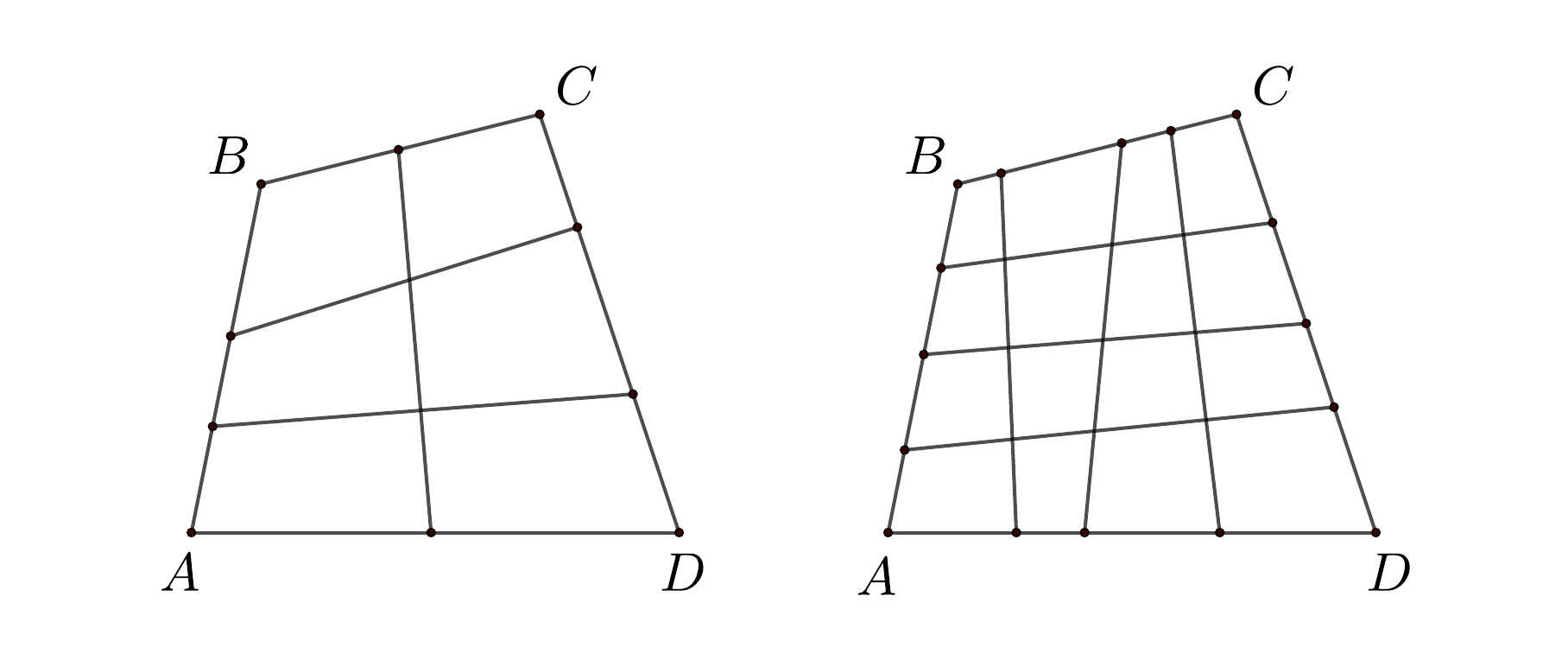}
\caption{\small A $3\times 2$ and a $4\times 4$ grid dissection of the convex quadrilateral $ABCD$}
\label{fig4}
\end{figure}

The following problem raised in \cite {IV} is the main motivation of our paper.
\vspace{-.2cm}
\begin{problem}\label{IVproblem}
\noindent Is it true that every cyclic, orthodiagonal or
tangential quadrilateral can be partitioned into cyclic,
orthodiagonal, or tangential quadrilaterals, respectively,
via an $m\times n$ grid dissection?
\end{problem}

The authors call such dissections \emph{class preserving grid dissections}.

In \cite {IV} it is proved that if either $m$ or $n$ is even then an $m\times n$ grid dissection of a cyclic quadrilateral $Q$
into $mn$ cyclic quadrilaterals exists only if $Q$ is either an isosceles trapezoid or a rectangle. The situation is a little bit better
if both $m$ and $n$ are odd.
\begin{thm}(\cite{IV})

(a) Every cyclic quadrilateral all of whose angles are
greater than $\arccos((\sqrt{5}-1)/{2})\approx 51.83^{\circ}$ admits
a $3\times 1$ grid dissection into three cyclic quadrilaterals.

(b) Let $Q$ be a cyclic quadrilateral such that the measure
of each of the arcs determined by the vertices of $Q$ on the circumcircle is greater
than $60^{\circ}$. Then $ABCD$ admits a $3\times 3$ grid dissection into nine cyclic
quadrilaterals.
\end{thm}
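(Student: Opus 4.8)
The plan is to translate ``this sub-quadrilateral is cyclic'' into an angle condition via the inscribed-angle theorem, read off from that the directions the cutting segments are forced to have, observe that the remaining piece(s) then become cyclic for free, and finally check that the forced configuration actually fits inside the quadrilateral — this feasibility check being where the numerical hypotheses get spent.

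For part~(a), put $\angle A=\alpha$, $\angle B=\beta$, so $\angle C=180^\circ-\alpha$, $\angle D=180^\circ-\beta$, and write the two cuts as $s_1$ (nearer $A$), $s_2$ (nearer $B$), with $s_i$ joining $P_i\in AB$ to $Q_i\in CD$. The first claim I would prove is that $AP_1Q_1D$ is cyclic if and only if $s_1\parallel BC$: cyclicity amounts to $\angle AP_1Q_1+\angle ADQ_1=180^\circ$, and since $\angle ADQ_1=\angle ADC=180^\circ-\beta$ this says $\angle AP_1Q_1=\beta$, i.e.\ $P_1Q_1$ makes the angle $\angle ABC=\beta$ with ray $P_1A$, which is exactly $P_1Q_1\parallel BC$; symmetrically $P_2BCQ_2$ is cyclic iff $s_2\parallel AD$. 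Next I would show that once $s_1\parallel BC$ and $s_2\parallel AD$ the middle piece $P_1P_2Q_2Q_1$ is automatically cyclic: a direct computation gives its angles as $180^\circ-\beta$ at $P_1$, $180^\circ-\alpha$ at $P_2$, $\beta$ at $Q_2$, $\alpha$ at $Q_1$, and opposite angles sum to $180^\circ$. So the \emph{only} possible dissection is ``cut through $P_1$ parallel to $BC$, cut through $P_2$ parallel to $AD$'', and it always produces three cyclic quadrilaterals — \emph{provided} $Q_1,Q_2$ genuinely lie on the open segment $CD$ in the order $D,Q_1,Q_2,C$ and all four pieces are non-degenerate.

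Securing that proviso is the real content of part~(a). I would parametrise $P_1,P_2$ along $AB$, compute $Q_1,Q_2$ as the intersections of line $CD$ with the lines through $P_1,P_2$ parallel to $BC$ and to $AD$, and determine for which quadrilaterals an admissible choice exists; pushing $P_1\to A$ and $P_2\to B$ makes the requirement tightest and reduces it to an inequality on the shape of $ABCD$. The threshold angle $\theta=\arccos((\sqrt5-1)/2)$ satisfies $\cos^2\theta+\cos\theta=1$, and I would expect exactly this quadratic to drop out of the extremal configuration, so that ``every angle of $ABCD$ exceeds $\theta$'' is precisely what is needed. This positioning estimate, not the angle-chasing, is the step I expect to be the main obstacle.

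For part~(b): if all four arcs exceed $60^\circ$ then every angle of $ABCD$ lies in $(60^\circ,120^\circ)$, hence exceeds $\arccos((\sqrt5-1)/2)$, so part~(a) cuts $ABCD$ into three cyclic ``columns'' $R_1,R_2,R_3$ via $s_1\parallel BC$ and $s_2\parallel AD$. A short computation shows each $R_i$ again has all four angles among $\{\alpha,\beta,180^\circ-\alpha,180^\circ-\beta\}\subset(60^\circ,120^\circ)$, so part~(a) applies to every $R_i$ too; and in each $R_i$ the pair of sides that the two new cuts do \emph{not} meet consists of a sub-segment of $AB$ and a sub-segment of $CD$, so part~(a) forces one new cut in $R_i$ parallel to $CD$ and the other parallel to $AB$. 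Consequently a single straight segment $t_1\parallel CD$ running from $AD$ to $BC$ restricts inside each $R_i$ to the part-(a) cut it requires, and likewise one segment $t_2\parallel AB$; choosing the positions of $s_1,s_2,t_1,t_2$ in the window that is feasible for all of $R_1,R_2,R_3$ simultaneously (which the arc hypothesis guarantees), the four segments form a $3\times3$ grid dissection of $ABCD$ in which each of the nine cells is a piece of some $R_i$'s part-(a) dissection and is therefore cyclic. Again the one delicate point is this simultaneous feasibility of the four cuts, for which the ``arcs $>60^\circ$'' condition is exactly calibrated.
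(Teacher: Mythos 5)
This theorem is quoted from \cite{IV}; the present paper contains no proof of it, so your argument can only be judged on its own terms. The angle-chasing half is correct and is surely the intended route: in a $3\times 1$ dissection by cuts $P_1Q_1,P_2Q_2$ from $AB$ to $CD$, the piece $AP_1Q_1D$ is cyclic iff $P_1Q_1\parallel BC$, the piece $P_2BCQ_2$ is cyclic iff $P_2Q_2\parallel AD$, and the middle piece is then cyclic automatically; likewise your verification that the nine cells of the combined grid are cyclic once all four cuts have the forced directions is sound.

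The genuine gap is in the feasibility step, and it is not merely a computation you have deferred: the plan as described would fail. For a \emph{fixed} labelling, ``all angles $>\arccos((\sqrt5-1)/2)$'' does \emph{not} guarantee that two admissible cuts from $AB$ to $CD$ exist. Take the cyclic quadrilateral with arcs $AB=20^{\circ}$, $BC=90^{\circ}$, $CD=130^{\circ}$, $DA=120^{\circ}$: its angles are $110^{\circ},125^{\circ},70^{\circ},55^{\circ}$, all above the threshold, yet the parallel to $BC$ through any point of $AB$ meets $CD$ at least $83\%$ of the way from $D$ to $C$, while the parallel to $AD$ through any point of $AB$ meets $CD$ at most $22\%$ of the way from $D$ to $C$; so the required order $D,Q_1,Q_2,C$ can never be achieved and no dissection in this direction exists. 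Part (a) must therefore be read as asserting that \emph{some} direction works, and the real content of the proof is that the two directional feasibility conditions cannot fail simultaneously under the angle hypothesis --- an interplay your sketch never engages with. (Your heuristic that pushing $P_1\to A$ and $P_2\to B$ makes the requirement ``tightest'' is also backwards: when $\angle A+\angle B>180^{\circ}$ this is the \emph{loosest} configuration, and feasibility is decided precisely by whether it works in the limit.) The same defect propagates into part (b): since (a) only delivers a dissection in one unspecified direction, you cannot invoke it once for the $AB$--$CD$ cuts and once for the $AD$--$BC$ cuts (nor once per column $R_i$); the arc condition must be used directly to establish feasibility of \emph{both} families of parallel cuts simultaneously, which is presumably exactly why the hypothesis of (b) is phrased in terms of arcs rather than angles.
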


It is conjectured however, that when $m$ and $n$ both odd become large, a cyclic quadrilateral admits
an $m\times n$ grid dissection into cyclic quadrilaterals only if it is ``close'' to being a rectangle.

Things get even worse for class preserving grid dissections of orthodiagonal quadrilaterals.
It is conjectured that with the possible exception of some special cases, such dissections do not exist.

After these mostly negative results, it comes as a surprise that there is some hope when dealing with tangential quadrilaterals.

Ismailescu and Vojdany \cite{IV} proved that every tangential quadrilateral has a $2\times 2$ class preserving grid dissection and asked
whether this result can be extended for $n\times n$ grid dissections.

Surprisingly enough, the answer is affirmative and this constitutes the main result of our paper.
The following elementary result provides a characterization of tangential quadrilaterals and will be used in the sequel.
The direct statement is due to Pitot and the converse to Steiner.

\begin{thm}(see e. g. \cite {yiu})\label{Pitot}
A quadrilateral is tangential if and only if the sum of two opposite sides equals
the sum of the other two opposite sides.
\end{thm}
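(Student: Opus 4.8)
The plan is to treat the two implications of Theorem~\ref{Pitot} separately; throughout I assume the quadrilateral is convex, as is implicit in the paper's setting. The forward (Pitot) direction is a one-line computation with tangent segments, while the converse (Steiner) requires building the incircle and then verifying one last tangency.

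\textbf{Pitot direction.} Suppose $ABCD$ has an incircle $\omega$ touching the sides $AB,BC,CD,DA$ at $P,Q,R,S$ respectively. The two tangent segments from an external point to a circle are equal, so $AP=AS$, $BP=BQ$, $CQ=CR$, $DR=DS$. Hence
\[
AB+CD=(AP+PB)+(CR+RD)=(AS+BQ)+(CQ+DS)=(BQ+QC)+(DS+SA)=BC+DA .
\]

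\textbf{Steiner direction.} Assume $AB+CD=BC+DA$. I would first produce a candidate incircle: the internal bisectors of $\angle A$ and $\angle B$ meet at a point $I$ on the same side of line $AB$ as the quadrilateral (in the triangle they cut off along $AB$ the base angles are $\tfrac12\angle A$ and $\tfrac12\angle B$, summing to less than $180^\circ$ by convexity). Since $I$ lies on the bisector of $\angle A$ it is equidistant from lines $DA$ and $AB$, and since it lies on the bisector of $\angle B$ it is equidistant from lines $AB$ and $BC$; let $r$ be this common distance and let $\omega$ be the circle of radius $r$ centred at $I$, tangent to all three of the lines $DA$, $AB$, $BC$. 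It remains to show $\omega$ is tangent to $CD$. Draw from $C$ the tangent to $\omega$ other than line $CB$ and let it meet line $AD$ at $E$; then $ABCE$ is a tangential quadrilateral, so by the Pitot direction $AB+CE=BC+AE$. Subtracting this from the hypothesis gives $CD-CE=DA-EA$, and a short check on where $E$ sits on line $AD$ converts this to $|CD-CE|=DE$; the equality case of the triangle inequality for $\triangle CDE$ then forces $C$, $D$, $E$ to be collinear, hence $E=D$ since $D$ and $E$ both lie on line $AD$. But $E=D$ says exactly that $CD$ is tangent to $\omega$, so $\omega$ is the incircle of $ABCD$.

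The Pitot direction is routine; the work, and the main obstacle, lies in making the configuration claims of the Steiner argument precise. One must check that the bisectors of $\angle A$ and $\angle B$ actually meet inside the quadrilateral and that $\omega$ is tangent to the genuine sides rather than to their extensions; one must dispose of the degenerate sub-case in which the second tangent from $C$ is parallel to line $AD$ (replace the roles of $C$ and $D$, or argue by continuity); and one must carry out the sign bookkeeping in $DA-EA$ according to whether $E$ falls inside segment $AD$, beyond $D$, or beyond $A$, showing in particular that the last alternative cannot occur in a convex configuration. Each of these is elementary, but collectively they are why the converse is credited to Steiner rather than being as immediate as Pitot's half.
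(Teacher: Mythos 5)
The paper offers no proof of this statement --- it is quoted as a known result with a pointer to \cite{yiu} --- so there is nothing internal to compare against. Your argument is the classical one and is correct: the Pitot direction by pairing equal tangent segments, and the Steiner converse by constructing the circle tangent to $DA$, $AB$, $BC$ from the bisectors at $A$ and $B$, drawing the second tangent from $C$ to meet line $AD$ at $E$, applying the already-proved direction to $ABCE$, and forcing $E=D$ via the equality case of the triangle inequality in $\triangle CDE$. The configuration caveats you list (the parallel-tangent degeneracy, the sign analysis according to whether $E$ lies inside $AD$, beyond $D$, or beyond $A$, and ruling out the last case by convexity) are exactly the points that need care, and you have correctly identified rather than glossed over them; filling them in is routine. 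This is a complete and appropriate proof of the cited theorem.
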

\end{section}

\begin{section}{\bf The main result}
\begin{thm} For any integer $n\ge 2$ and for any tangential quadrilateral $Q$, there exists an $n\times n$ grid dissection of $Q$ into $n^2$ smaller tangential quadrilaterals.
\end{thm}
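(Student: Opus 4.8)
The strategy is to translate the statement into the Pitot criterion (Theorem~\ref{Pitot}) and then to hunt for the $2(n-1)$ crossing segments inside two pencils of lines, which leaves few enough parameters to handle. Set up notation: label the $\mathcal{S}$--segments $s_1,\dots,s_{n-1}$ (endpoints on $AB$ and $CD$) and the $\mathcal{T}$--segments $t_1,\dots,t_{n-1}$ (endpoints on $AD$ and $BC$), put $s_0=AD$, $s_n=BC$, $t_0=AB$, $t_n=CD$, and let $O_{ij}=s_i\cap t_j$ be the grid vertices. For $1\le i,j\le n$ let $h_{i,j}$ be the length of the piece of $t_j$ between $O_{i-1,j}$ and $O_{i,j}$, and $v_{i,j}$ the length of the piece of $s_i$ between $O_{i,j-1}$ and $O_{i,j}$. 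By Theorem~\ref{Pitot} the cell $O_{i-1,j-1}O_{i,j-1}O_{i,j}O_{i-1,j}$ is tangential exactly when
\begin{equation}\label{Pcell}
 h_{i,j-1}+h_{i,j}=v_{i-1,j}+v_{i,j}.
\end{equation}
So the problem is to choose the crossing segments so that all $n^{2}$ equations \eqref{Pcell} hold while the grid stays convex and non--degenerate. The segments carry only $4(n-1)$ real parameters, so this system is badly over--determined, and the real content of the theorem is that for a well chosen family of grids the equations \eqref{Pcell} are very far from independent.

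To gain this, I would look for the grid lines inside two pencils: $s_1,\dots,s_{n-1}$ on lines through the point $F=AD\cap BC$ and $t_1,\dots,t_{n-1}$ on lines through $E=AB\cap CD$ (one or both pencils becoming a family of parallel lines when $Q$ is a trapezoid or a parallelogram; for a parallelogram this is the classical checkerboard). Now there is just one parameter per line, say $\alpha_1,\dots,\alpha_{n-1}$ for the first pencil and $\beta_1,\dots,\beta_{n-1}$ for the second. Projection from $E$ matches up, via a perspectivity between any two lines $s_i$, $s_{i'}$ of the $\mathcal{S}$--pencil, the vertices $O_{i,j}$ and $O_{i',j}$ lying on the common line $t_j$, and projection from $F$ does the symmetric thing for the $\mathcal{T}$--pencil; the plan is to exploit these projective relations to reduce \eqref{Pcell}. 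Concretely: sum \eqref{Pcell} over each row and over each column to get $2n$ ``aggregate'' relations among the edge lengths (satisfying one obvious dependence); argue that together with a few individual equations these aggregates determine the $\alpha_i$ in terms of the $\beta_j$; and then show that with the $\alpha_i$ so chosen every remaining equation \eqref{Pcell} is a consequence of the aggregates, leaving only an $(n-1)$--dimensional system in $\beta_1,\dots,\beta_{n-1}$ — one that has the obvious solution for the square and so can serve as the starting point of a continuity argument. As a sanity check, for $n=2$ this scheme should recover the $2\times2$ dissection of \cite{IV}, and for an isosceles trapezoid it returns the axis of symmetry as the crossing $\mathcal{S}$--segment.

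The step I expect to be the real obstacle is precisely this collapse: proving that, within the two--pencil family, all $n^{2}$ equations \eqref{Pcell} follow from the few aggregate ones. I would attack it either by writing an explicit recursion expressing $\alpha_{i+1}$ and $\beta_{j+1}$ through $\alpha_i$, $\beta_j$ and the sides of $Q$, or, if no clean recursion appears, by a degree or continuity argument: deform $Q$ through tangential quadrilaterals to a square and follow the checkerboard solution. Two secondary points must still be settled at the end — the positivity and betweenness conditions guaranteeing that the algebraic solution really is a convex dissection with all cells non--degenerate, and the degenerate pencils of the trapezoid and parallelogram, which are cleanest to treat first and recover in general by a limiting argument. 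Should the two--pencil family turn out to be too rigid, a fallback is an induction founded on the $2\times2$ dissection of \cite{IV} — dissect $Q$ into four tangential quadrilaterals, dissect each of these, and glue the four sub--grids — which would first require a strengthened lemma allowing the subdivision of two adjacent sides of a tangential quadrilateral to be prescribed.
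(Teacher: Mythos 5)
Your structural ansatz is exactly the one the paper uses: the $\mathcal{S}$-segments lie on a pencil through $AD\cap BC$ and the $\mathcal{T}$-segments on a pencil through $AB\cap CD$, so that the whole grid is the image of a checkerboard under a map sending horizontal lines to one pencil and vertical lines to the other. But the proposal stops precisely where the proof has to start. You correctly observe that the $n^2$ Pitot equations \eqref{Pcell} are badly over-determined in the $2(n-1)$ pencil parameters and that ``the real content of the theorem'' is their collapse — and then you offer only candidate strategies (row/column aggregates, an unspecified recursion, a continuity/degree deformation to the square) without establishing any of them. None is obviously workable: the row and column sums of \eqref{Pcell} do not telescope into anything that visibly determines the $\alpha_i$ from the $\beta_j$, and a continuity argument starting from the checkerboard needs a transversality/implicit-function input that is nowhere near justified for a system with $n^2$ equations in $2(n-1)$ unknowns. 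What actually makes the collapse happen is a very specific choice of the lines within each pencil. The paper finds it by first deriving a necessary infinitesimal condition (equation \eqref{fxgx}, obtained by applying Pitot to the image of an $\epsilon$-square and letting $\epsilon\to 0$), solving the resulting separable ODEs to get the explicit parametrization $F(x)=2a^x/(a^{2x}-1)$, $G(y)=2a^y/(a^{2y}-1)$ and the map \eqref{T}, and then verifying by a direct side-length computation that \emph{every} axes-parallel square (not just infinitesimal ones) maps to a tangential quadrilateral. That explicit formula, and the global verification, are the content of the theorem; your proposal contains neither.

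A second substantive gap is surjectivity. Even granting a map that turns the checkerboard into a tangential grid, you must show that the \emph{given} quadrilateral $Q$ (suitably normalized) arises as the image of some square. In the paper this is Theorem \ref{onto}: one solves for the square's data in terms of the half-angle tangents $t_i=\tan(\angle/2)$ of $Q$, which leads to three quadratic equations, and then proves the inequalities $XY>1$, $L>1$ needed for the square to lie in the admissible half-plane and have positive side. You flag ``positivity and betweenness'' as a secondary point to be ``settled at the end,'' but it is a full theorem's worth of work with genuinely non-trivial inequalities (resting on $1-t_1t_2>0$, $1-t_1t_4>0$, $t_1t_2+t_1t_4+t_2t_4>1$ for a tangential quadrilateral in the normalized position). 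As it stands, the proposal identifies the right family of grids and the right reformulation via Theorem \ref{Pitot}, but both pillars of the proof — the explicit spacing of the pencil lines that satisfies all $n^2$ cell conditions, and the realization of the prescribed quadrilateral $Q$ — are missing.
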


The idea of the proof is natural. Given a checkerboard dissected axes-parallel square $ABCD$ in the $xy$ plane and a ``target'' tangential quadrilateral $A'B'C'D'$ in the $uv$ plane (see figure \ref {fig5} below), we are trying to find a geometric transformation $T$ with the following properties:

\begin{figure}[!htb]
\centering
\includegraphics[scale=.54]{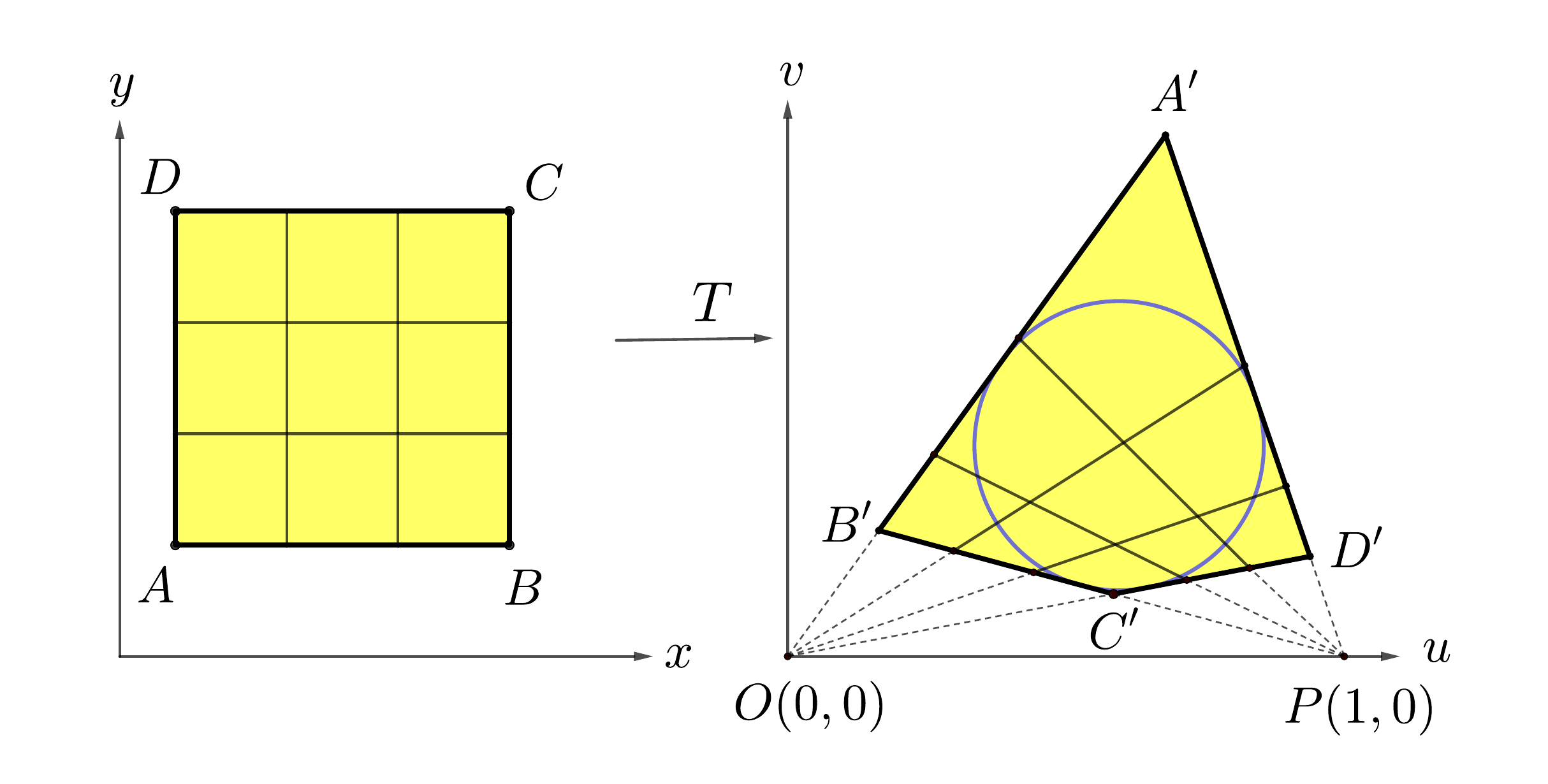}
\caption{\small The main idea behind transformation $T$}
\label{fig5}
\end{figure}

\begin{itemize}
\item{} $T$ maps arbitrarily small axes-parallel squares from the $xy$ plane into tangential quadrilaterals in the $uv$ plane.
\item{} $T$ maps the horizontal grid lines from the $xy$ plane into the grid lines passing through the point $O(0,0)$ in the $uv$ plane.
\item{} $T$ maps the vertical grid lines from the $xy$ plane into the grid lines passing through $P(1,0)$ in the $uv$ plane.

\end{itemize}

Of course, at this point we do not know whether such a transformation exists, and even if it does, whether it achieves the overall goal of generating
the desired $n\times n$ grid dissection of $A'B'C'D'$ into smaller tangential quadrilaterals.
\end{section}

\begin{section}{\bf The local condition}

\begin{lemma} Suppose that there exists a transformation $T:\mathbb{R}^2\rightarrow \mathbb{R}^2$ from the $xy$
plane to the $uv$ plane defined as $T:\,\,u=f(x,y)$, $v=g(x,y)$ such that $T$
maps every square from the $xy$ plane into a tangential quadrilateral in the $uv$ plane. Then
\begin{equation}\label{fxgx}
f^2_x(a,\,b)+g^2_x(a,\,b)=f^2_y(a,\,b)+g^2_y(a,\,b)\quad {\rm for\,\, every}\,\, (a,\,b) \in \mathbb{R}^2.
\end{equation}
\end{lemma}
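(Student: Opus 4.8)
The plan is to take an arbitrary square in the $xy$-plane, write down its image under $T$ to first order, and impose the Pitot--Steiner condition (Theorem~\ref{Pitot}) as the square shrinks to the point $(a,b)$.

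Concretely, I would fix a point $(a,b)$ and a small parameter $\varepsilon>0$, and consider the axis-parallel square with vertices
$A=(a,b)$, $B=(a+\varepsilon,b)$, $C=(a+\varepsilon,b+\varepsilon)$, $D=(a,b+\varepsilon)$.
Its image under $T$ is the quadrilateral $A'B'C'D'$ with $A'=(f(A),g(A))$, etc. For this image to be tangential, Theorem~\ref{Pitot} requires
\begin{equation*}
|A'B'| + |C'D'| = |B'C'| + |D'A'|.
\end{equation*}
Now I would expand each side length in powers of $\varepsilon$ using the differentiability of $f$ and $g$ at $(a,b)$. For instance $A'B' = (f(a+\varepsilon,b)-f(a,b),\ g(a+\varepsilon,b)-g(a,b)) = \varepsilon\,(f_x,g_x) + o(\varepsilon)$, so $|A'B'| = \varepsilon\sqrt{f_x^2+g_x^2} + o(\varepsilon)$, and similarly $|C'D'| = \varepsilon\sqrt{f_x^2+g_x^2} + o(\varepsilon)$ (the gradient is evaluated along the opposite edge, but it agrees with the value at $(a,b)$ up to $o(1)$), while $|B'C'|$ and $|D'A'|$ each contribute $\varepsilon\sqrt{f_y^2+g_y^2} + o(\varepsilon)$, all partial derivatives taken at $(a,b)$.

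Substituting these expansions into the Pitot equality gives
\begin{equation*}
2\varepsilon\sqrt{f_x^2+g_x^2} + o(\varepsilon) = 2\varepsilon\sqrt{f_y^2+g_y^2} + o(\varepsilon),
\end{equation*}
and dividing by $2\varepsilon$ and letting $\varepsilon\to 0^+$ yields $\sqrt{f_x^2+g_x^2} = \sqrt{f_y^2+g_y^2}$ at $(a,b)$, which is exactly \eqref{fxgx} after squaring. Since $(a,b)$ was arbitrary, this holds everywhere.

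The step I expect to require the most care is the error-term bookkeeping: I must check that the two ``vertical'' edges $B'C'$ and $D'A'$ really do have the same gradient-norm leading coefficient despite being based at $B=(a+\varepsilon,b)$ and $A=(a,b)$ respectively — this is where mere differentiability (rather than $C^1$) must be used carefully, or alternatively one restricts to $T$ of class $C^1$, which is harmless for the application. One clean way around this is to instead use the square centered at $(a,b)$, i.e. vertices $(a\pm\varepsilon, b\pm\varepsilon)$; then by symmetry both horizontal edges are $2\varepsilon\sqrt{f_x^2+g_x^2}+o(\varepsilon)$ and both vertical edges are $2\varepsilon\sqrt{f_y^2+g_y^2}+o(\varepsilon)$ with all derivatives at $(a,b)$, and the cancellation is immediate. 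Everything else is a routine first-order Taylor expansion.
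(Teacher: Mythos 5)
Your proposal matches the paper's argument essentially verbatim: the paper also takes the axis-parallel square with corner at $(a,b)$ and side $\epsilon$, expands each image side length to first order to get $\lim (A'B'/\epsilon)=\lim(C'D'/\epsilon)=\sqrt{f_x^2+g_x^2}$ and $\lim(B'C'/\epsilon)=\lim(D'A'/\epsilon)=\sqrt{f_y^2+g_y^2}$, and then applies the Pitot--Steiner condition and passes to the limit. Your extra care about the error terms (or the centered-square variant) only tidies up a point the paper glosses over; the approach is the same.
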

\begin{proof}
\begin{figure}[!htb]
\centering
\includegraphics[scale=.5]{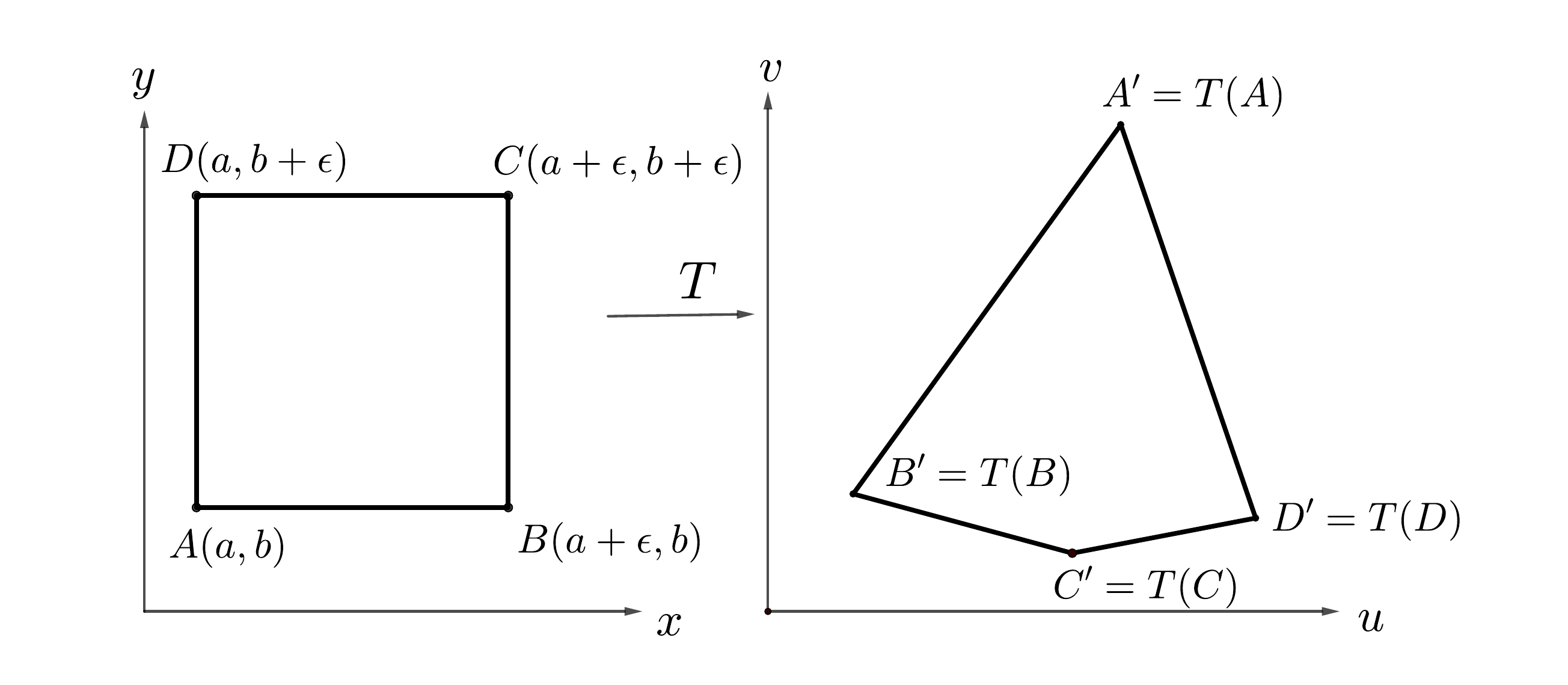}
\vspace{-0.5cm}
\caption{\small The local condition}
\label{fig6}
\end{figure}
Let $ABCD$ be an axis-parallel square of side $\epsilon$ in the $xy$ plane and let $A'=T(A)$, $B'=T(B)$, $C'=T(C)$
and $D'=T(D)$ as shown in figure \ref{fig6}. It follows that $u_{A'}=f(a,\,b)$, $v_{A'}=g(a,\,b)$,
$u_{B'}=f(a+\epsilon,\,b)$, $v_{B'}=g(a+\epsilon,\,B)$ etc. Consider first the ratio
\begin{eqnarray}
\frac{A'B'^2}{\epsilon^2}=\left[\frac{f(a+\epsilon,b)-f(a,b)}{\epsilon}\right]^2&+&\left[\frac{g(a+\epsilon,b)-
g(a,b)}{\epsilon}\right]^2\quad {\rm from\,\, which}\nonumber\\
\lim_{\epsilon\rightarrow 0} \left(\frac{A'B'}{\epsilon}\right) &=& \sqrt{f^2_x(a,b)+g^2_x(a,b)}\label{MN}.
\end{eqnarray}
Similarly,
\begin{eqnarray}
\frac{C'D'^2}{\epsilon^2}=\left[\frac{f(a+\epsilon,b+\epsilon)-f(a,b+\epsilon)}{\epsilon}\right]^2&+&
\left[\frac{g(a+\epsilon,b+\epsilon)-g(a,b+\epsilon)}{\epsilon}\right]^2\rightarrow\nonumber\\
\lim_{\epsilon\rightarrow 0} \left(\frac{C'D'}{\epsilon}\right) &=& \sqrt{f^2_x(a,b)+g^2_x(a,b)}\label{PQ}.
\end{eqnarray}

Analogously, we obatain the following equalities:
\begin{eqnarray}
\frac{B'C'^2}{\epsilon^2}&=&\left[\frac{f(a+\epsilon,b+\epsilon)-f(a+\epsilon,b)}{\epsilon}\right]^2+
\left[\frac{g(a+\epsilon,b+\epsilon)-g(a+\epsilon,b)}{\epsilon}\right]^2,\nonumber\\
\frac{D'A'^2}{\epsilon^2}&=&\left[\frac{f(a,b+\epsilon)-f(a,b)}{\epsilon}\right]^2+\left[\frac{g(a,b+\epsilon)
-g(a,b)}{\epsilon}\right]^2, \,\,\text{which give}\nonumber\\
&&\lim_{\epsilon\rightarrow 0} \left(\frac{B'C'}{\epsilon}\right) = \lim_{\epsilon\rightarrow 0}
\left(\frac{D'A'}{\epsilon}\right) = \sqrt{f^2_y(a,b)+g^2_y(a,b)}\label{NPQM}.
\end{eqnarray}

Since we want $A'B'C'D'$ to be tangential, by using the Pitot-Steiner condition it is necessary and sufficient to have that $A'B'+C'D'=B'C'+D'A'$. Divide both sides by $\epsilon$
and pass to the limit as  $\epsilon\rightarrow 0$ then use relations \eqref {MN}, \eqref {PQ} and \eqref {NPQM} to obtain
the desired equality.
 \end{proof}
\end{section}


\begin{section}{\bf The trapezoid case.}
\begin{thm}\label{trapezoidthm}
There exists a transformation, $T:\mathbb{R}^2\rightarrow \mathbb{R}^2$ with the following properties

\noindent(a) $T$ maps vertical lines from the $xy$ plane into vertical lines in the $uv$ plane.\\
(b) $T$ maps horizontal lines from the $xy$ plane into lines through the origin in the $uv$ plane.\\
(c) $T$ maps any axes-parallel square from the $xy$ plane into a tangential trapezoid in the $uv$ plane.\\
(d) For any given tangential trapezoid $K$ in the $uv$ plane, an appropriately scaled and/or rotated copy of $K$
can be obtained as the image under transformation $T$ of a conveniently chosen square in the $xy$ plane.
\end{thm}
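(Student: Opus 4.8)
The plan is to exhibit $T$ in closed form. Conditions (a) and (b) are very rigid: because $T$ sends vertical lines to vertical lines, the first coordinate $u=f(x,y)$ must depend on $x$ only, say $u=\phi(x)$; and because each horizontal line $y=b$ is sent to a line through the origin, the ratio $v/u$ is constant along that line, so $v=h(y)\,\phi(x)$ for some function $h$. Hence $T(x,y)=\bigl(\phi(x),\,h(y)\,\phi(x)\bigr)$, and the only remaining freedom is the choice of $\phi$ and $h$. Substituting $f=\phi(x)$, $g=h(y)\phi(x)$ into the necessary local condition \eqref{fxgx} gives
\[
\phi'(x)^{2}\bigl(1+h(y)^{2}\bigr)=h'(y)^{2}\,\phi(x)^{2};
\]
separating variables forces $\phi'/\phi$ and $h'/\sqrt{1+h^{2}}$ to equal a common nonzero constant $\pm k$, whence $\phi(x)=Ce^{\pm kx}$ and $h(y)=\sinh(\pm ky+c)$. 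After an affine change of variables that carries squares to squares (a common rescaling and translation of the two axes, plus possible reflections) and after absorbing $C$ into an overall scaling, we arrive at the candidate
\[
T(x,y)=\bigl(e^{x},\;e^{x}\sinh y\bigr).
\]

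Next I would verify (c) for this $T$; properties (a) and (b) are then immediate, since the line $x=a$ maps into $u=e^{a}$ and the line $y=b$ maps into $v=(\sinh b)\,u$. Let $A=(a,b)$, $B=(a+\epsilon,b)$, $C=(a+\epsilon,b+\epsilon)$, $D=(a,b+\epsilon)$ be the vertices of an axis-parallel square; its image has vertices $A'=(e^{a},e^{a}\sinh b)$, $B'=(e^{a+\epsilon},e^{a+\epsilon}\sinh b)$, $C'=(e^{a+\epsilon},e^{a+\epsilon}\sinh(b+\epsilon))$, $D'=(e^{a},e^{a}\sinh(b+\epsilon))$. Since $A'D'$ and $B'C'$ lie on the vertical lines $u=e^{a}$ and $u=e^{a+\epsilon}$, the quadrilateral $A'B'C'D'$ is a trapezoid (a genuine convex one, as one checks directly), while $A'B'$ lies on $v=(\sinh b)\,u$ and $C'D'$ on $v=(\sinh(b+\epsilon))\,u$. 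Writing out the four side lengths and using
\[
\cosh b+\cosh(b+\epsilon)=2\cosh\bigl(b+\tfrac{\epsilon}{2}\bigr)\cosh\tfrac{\epsilon}{2},\qquad
\sinh(b+\epsilon)-\sinh b=2\cosh\bigl(b+\tfrac{\epsilon}{2}\bigr)\sinh\tfrac{\epsilon}{2},
\]
together with $e^{a+\epsilon}-e^{a}=2e^{a+\epsilon/2}\sinh\tfrac{\epsilon}{2}$ and $e^{a+\epsilon}+e^{a}=2e^{a+\epsilon/2}\cosh\tfrac{\epsilon}{2}$, one obtains $A'B'+C'D'=B'C'+D'A'$, so Theorem \ref{Pitot} shows that $A'B'C'D'$ is tangential. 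Note that $e^{a}$ is a common factor of all four image vertices, so the similarity type of the image depends only on $(b,\epsilon)$.

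For (d), let $K$ be a tangential trapezoid. Rotate it so its two parallel sides are vertical and extend its two legs to meet at their apex $V$ (if the two parallel sides are equal, $K$ is a rhombus and the statement is trivial). Translating $V$ to the origin, reflecting if necessary so that the shorter parallel side is the one closer to $V$, and rescaling so that this side lies on $u=1$, we may assume $K$ has vertices $(1,m_{1})$, $(R,Rm_{1})$, $(R,Rm_{2})$, $(1,m_{2})$ with $m_{1}<m_{2}$ and $R>1$; equivalently, $K$ is exactly the quadrilateral bounded by the two lines $v=m_{1}u$ and $v=m_{2}u$ through the origin and the two vertical lines $u=1$ and $u=R$. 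Setting $m_{1}=\sinh\beta$, $m_{2}=\sinh\gamma$ with $\beta<\gamma$, the Pitot--Steiner equality for $K$ reads
\[
\frac{R-1}{R+1}=\frac{\sinh\gamma-\sinh\beta}{\cosh\gamma+\cosh\beta}=\tanh\frac{\gamma-\beta}{2},
\]
and since $K$ is tangential this forces $R=e^{\gamma-\beta}$. But then the image under $T$ of the square $[0,\gamma-\beta]\times[\beta,\gamma]$ has vertices $(1,\sinh\beta)$, $(R,R\sinh\beta)$, $(R,R\sinh\gamma)$, $(1,\sinh\gamma)$ -- that is, it coincides with $K$ in this normalized position, which proves (d).

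The step I expect to be the crux is the reverse direction (d): everything hinges on recognizing -- and this is exactly what the Pitot--Steiner condition delivers -- that \emph{every} tangential trapezoid, once placed in the appropriate position, is one of the highly special trapezoids cut out by two lines through a common point and two lines perpendicular to a fixed direction, and that these are precisely the $T$-images of squares. Once that is seen, the computations in (c) are routine hyperbolic algebra, and the form of $T$ itself is essentially forced by (a), (b) and the local condition.
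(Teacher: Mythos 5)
Your proof is correct and follows essentially the same route as the paper: your map $T(x,y)=(e^{x},\,e^{x}\sinh y)$ is exactly the paper's \eqref{Ttrapezoid} with $a=e$, and both the Pitot verification for (c) and the parameter choice for (d) match the paper's computations (your hyperbolic identities and your use of the Pitot condition to pin down $R=e^{\gamma-\beta}$ just make the algebra cleaner and the surjectivity argument slightly more explicit than the paper's). One small caveat: if a rhombus were admitted as a tangential trapezoid, part (d) would actually \emph{fail} for it rather than be trivial, since every $T$-image of a square has its legs concurrent at the origin and hence non-parallel; so your parenthetical should instead say that the exclusive definition of trapezoid is in force, as the paper implicitly assumes when it speaks of ``the non-parallel sides.''
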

\begin{proof}
Let $T:u=f(x,y),v=g(x,y)$
Any vertical line $x=c$ in the $xy$ plane is mapped into a vertical line $u=c'$, in the $uv$ plane.
This means $f(c,y)=c'$, where $c'$ depends on $c$ only. It follows that $f(x,y)$ depends on $x$ only, that is
\begin{equation}\label{f}
f(x,y)=F(x).
\end{equation}
On the other hand, any horizontal $y=c$ is mapped into a line through the origin $v=m_cu$ which can be rewritten as $g(x,c)=m_c\cdot f(x,c)$, from which $g(x,c)/f(x,c)=m_c$, where $m_c$ depends only on $c$. This means that $g(x,y)/f(x,y)=G(y)$, which after using (\ref f) becomes
\begin{equation}\label{g}
g(x,y)=F(x)\cdot G(y).
\end{equation}

Taking partial derivatives, we obtain
\begin{equation*}
f_x(x,y)=F'(x),\,\,g_x(x,y)=F'(x)\cdot G(y),\,\,f_y(x,y)=0,\,\,g_y(x,y)=F(x)\cdot G'(y).
\end{equation*}

Using equation \eqref {fxgx}, we have that
\begin{equation*}
f^2_x+g^2_x=f^2_y+g^2_y\rightarrow [F'(x)]^2+[F'(x)]^2\cdot G^2(y)=F^2(x)\cdot [G'(y)]^2\quad \text{from which}
\end{equation*}

\begin{equation*}
\left(\frac{F'(x)}{F(x)}\right)^2=\frac{G'(y)^2}{1+G^2(y)}.
\end{equation*}

Since the left hand side of the above equation depends on $x$ only while the right hand side depends on $y$ only,
it follow that we have system of differential equations
\begin{equation*}
\frac{F'(x)}{F(x)}=\pm k,\qquad \frac{G'(y)}{\sqrt{1+G^2(y)}}=\pm k.
\end{equation*}

One set of solutions of this system is given by
\begin{equation*}
F(x)=a^x,\qquad G(y)=\frac{a^y-a^{-y}}{2}.
\end{equation*}

Using now relations (\ref f) and (\ref g), we have obtained
\begin{equation}\label{Ttrapezoid}
T:\quad f(x,y)=a^x,\qquad g(x,y)=\frac{a^x(a^y-a^{-y})}{2}.
\end{equation}

We still have to verify property (c).
Consider the axes-parallel square $ABCD$ in the $xy$ plane with  $ A(x,y),\, B(x+l,y),\, C(x+l,y+l),\, D(x,y+l)$ where $l >0$. Denote $A' := T(A),\,\, B':= T(B),\,\, C' := T(C),\,\, D' := T(D)$. It is easy to verify that $A'B'C'D'$ is a positively oriented trapezoid. It remains to check that $A'B'C'D'$ is tangential. Denote $X=a^x,\,\, Y=a^y,\,\, L=a^l$. Obviously, $X>0, Y>0, L>1$. To this end, we compute the side lengths of $A'B'C'D'$.

\begin{eqnarray*}
A'B'&=&\frac{X(1+Y^2)(L-1)}{2Y},\,\,\,\,\,\, C'D'\,\,=\,\,\frac{X(1+Y^2L^2)(L-1)}{2YL}\\
B'C'&=&\frac{X(1+Y^2L)(L-1)}{2Y},\,\,D'A'\,\,=\,\,\frac{X(1+Y^2L)(L-1)}{2YL}.
\end{eqnarray*}

It is easy to verify that $A'B'+C'D' = B'C'+D'A'$, thus $A'B'C'D'$ is tangential. Finally, we show that given any tangential trapezoid $K$, an appropriate scaled and/or rotated copy of $K$ can be obtained as the image under transformation $T$ of a conveniently chosen square in the $xy$ plane. Without loss of generality, assume that the parallel sides of $K$ are vertical, and that the non-parallel sides intersect at the origin - see figure \ref{fig7}.
\begin{figure}[!htb]
\centering
\includegraphics[scale=.55]{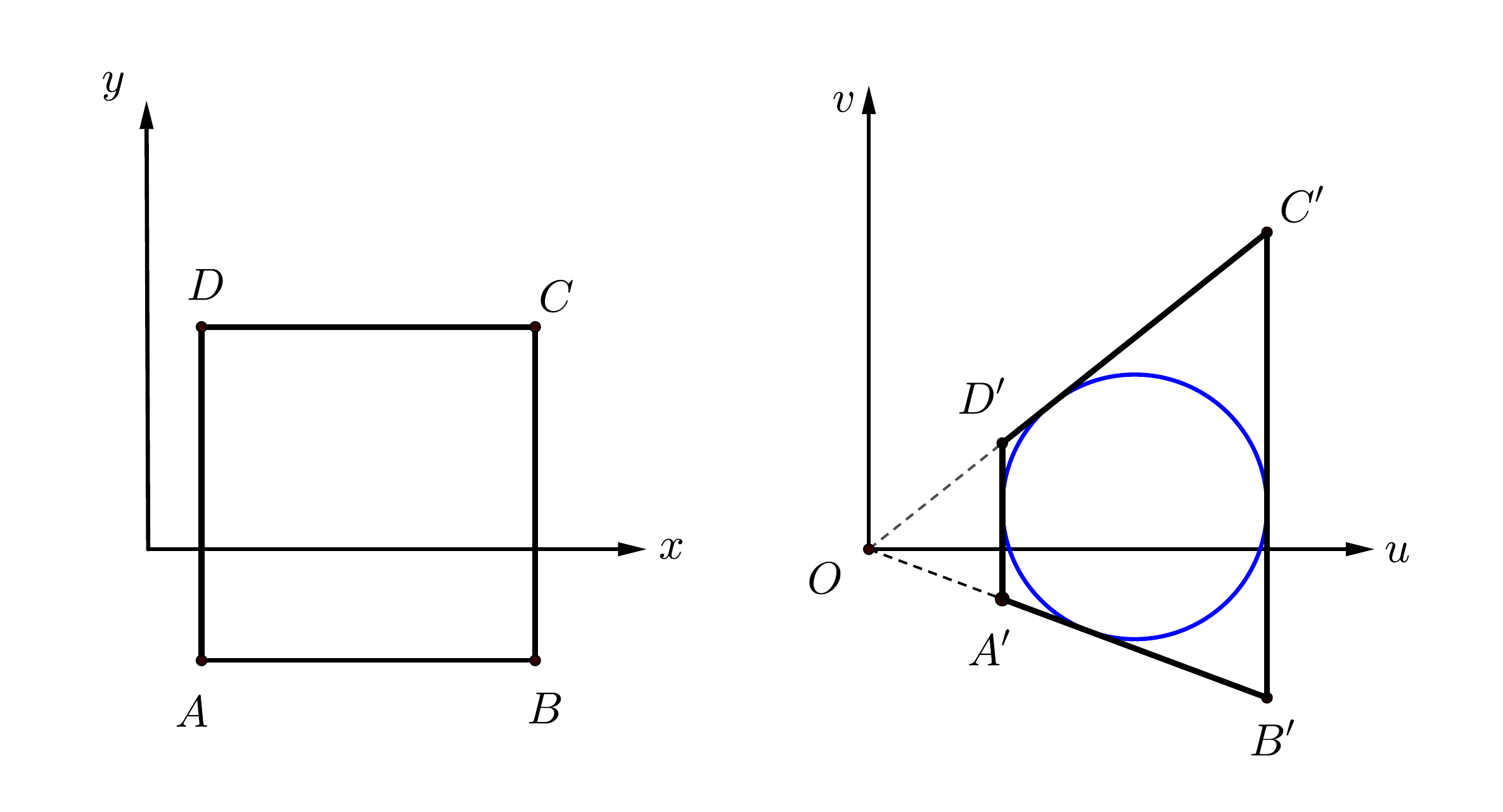}
\vspace{-0.2cm}
\caption{\small The square $ABCD$ being mapped into the tangential trapezoid $A'B'C'D'$. Vertical lines are mapped into vertical lines, horizontal lines are mapped into lines through the origin.}
\label{fig7}
\end{figure}

Let the slope of $A'B'$ be $m$, and let the slope of $C'D'$ be $p$, with $p>m$.  We have that the slope of $A'B' = \frac{Y^2-1}{Y}$, and the slope of $C'D' = \frac{Y^2L^2 - 1}{2YL}$. Choosing $X = 1$, $Y=m+\sqrt{1+m^2}$, and $L=(\sqrt{1+p^2}+p)(\sqrt{1+m^2}-m)$ achieves the desired goal.

\end{proof}
\end{section}

\begin{section}{\bf The general case}
A result similar to Theorem \ref{trapezoidthm} holds when the tangential quadrilateral to be dissected is not a trapezoid. We present the details below.

\begin{thm}\label{generalcase}
There exists a transformation, $T:\mathbb{R}^2 \rightarrow \mathbb{R}^2$ with the following properties

\noindent(a) $T$ maps vertical lines from the $xy$ plane into lines through the point $P(1,0)$ in the $uv$ plane.\\
(b) $T$ maps horizontal lines from the $xy$ plane into lines through the origin $O(0,0)$ in the $uv$ plane.\\
(c) $T$ maps any axes-parallel square lying in the halfplane $x+y>0$ of the $xy$ plane into a tangential quadrilateral in the $uv$ plane.
\end{thm}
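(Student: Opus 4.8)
The plan is to follow the route of Theorem~\ref{trapezoidthm}: encode properties~(a) and~(b) as a functional form for $f$ and $g$, feed that form into the necessary local condition~\eqref{fxgx}, solve the resulting separated differential equations to produce an explicit candidate $T$, and finally check property~(c) by a direct length computation together with the Pitot--Steiner criterion (Theorem~\ref{Pitot}). For the functional form, property~(b) forces each horizontal line $y=c$ onto a line $v=m_c u$ through $O$, so $g(x,y)/f(x,y)=G(y)$ for a one-variable function $G$; property~(a) forces each vertical line $x=c$ onto a line $v=k_c(u-1)$ through $P(1,0)$, so $g(x,y)/(f(x,y)-1)=H(x)$ for a one-variable function $H$. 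Eliminating $g$ gives $f(H-G)=H$, hence
\begin{equation*}
f(x,y)=\frac{H(x)}{H(x)-G(y)},\qquad g(x,y)=\frac{H(x)\,G(y)}{H(x)-G(y)} .
\end{equation*}

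Next, differentiating gives $f_x=-H'G/(H-G)^2$, $f_y=HG'/(H-G)^2$, $g_x=-H'G^2/(H-G)^2$, $g_y=H^2G'/(H-G)^2$, so condition~\eqref{fxgx} reduces to $H'^2G^2(1+G^2)=H^2G'^2(1+H^2)$, i.e.
\begin{equation*}
\frac{H'(x)^2}{H(x)^2\bigl(1+H(x)^2\bigr)}=\frac{G'(y)^2}{G(y)^2\bigl(1+G(y)^2\bigr)} .
\end{equation*}
As the two sides depend on different variables, each is a constant $k^2$; writing $k=\ln a$ with $a>0$, $a\ne1$, the separated equations $H'/(H\sqrt{1+H^2})=\pm k$ and $G'/(G\sqrt{1+G^2})=\pm k$ have general solutions of the form $\pm1/\sinh(kt+\text{const})$, and a convenient choice is
\begin{equation*}
H(x)=\frac{2a^{x}}{a^{2x}-1},\qquad G(y)=\frac{2a^{y}}{1-a^{2y}},
\end{equation*}
with $T$ defined by the two displayed formulas above. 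A short simplification gives $H(x)-G(y)=\dfrac{2(a^{x}+a^{y})(a^{x+y}-1)}{(a^{2x}-1)(a^{2y}-1)}$, and clearing denominators shows that $f$ and $g$ are rational in $a^{x},a^{y}$ with common denominator proportional to $(a^{x}+a^{y})(a^{x+y}-1)$, which is nonzero exactly off the line $x+y=0$. Hence $T$ is well defined (indeed real-analytic) on the halfplane $x+y>0$, and properties~(a) and~(b) hold by construction.

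It remains to verify~(c). Let $ABCD$ be an axes-parallel square of side $l>0$ contained in $x+y>0$, with $A=(x,y)$, $B=(x+l,y)$, $C=(x+l,y+l)$, $D=(x,y+l)$, and let $A'=T(A)$, $B'=T(B)$, $C'=T(C)$, $D'=T(D)$. Since $AB$ and $DC$ lie on horizontal lines and $AD$, $BC$ on vertical lines, the sides $A'B'$ and $C'D'$ lie on lines through $O$ while $B'C'$ and $D'A'$ lie on lines through $P$, and one checks that $A'B'C'D'$ is a convex, positively oriented quadrilateral. Setting $X=a^{x}$, $Y=a^{y}$, $L=a^{l}$ (so $X,Y>0$, $L>1$), one computes the four side lengths as explicit rational expressions in $X,Y,L$ and verifies the Pitot--Steiner identity $A'B'+C'D'=B'C'+D'A'$; by Theorem~\ref{Pitot}, $A'B'C'D'$ is tangential, as required.

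The first two steps are routine analogues of Theorem~\ref{trapezoidthm}; the real work lies in the last step. The side-length computation must be pushed through and the Pitot--Steiner cancellation confirmed for a \emph{finite} square --- the local condition only guarantees the equality in the infinitesimal limit --- and the convexity and correct orientation of $A'B'C'D'$ must be established. Here the hypothesis $x+y>0$ is essential: it singles out one of the two open halfplanes bounded by the singular locus $\{H(x)=G(y)\}=\{x+y=0\}$, and it is on such a halfplane that $T$ behaves well enough (injectively, without folding) for the image of every axes-parallel square to be a genuine, nondegenerate tangential quadrilateral.
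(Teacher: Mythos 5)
Your proposal is correct and follows essentially the same route as the paper: derive the functional form $f=H/(H-G)$, $g=HG/(H-G)$ from properties (a) and (b), feed it into the local condition to get the separated ODE $H'/(H\sqrt{1+H^2})=\pm k=G'/(G\sqrt{1+G^2})$, take the explicit hyperbolic-type solutions, and verify Pitot--Steiner for a finite square by direct computation of the four side lengths (which the paper writes out explicitly and you leave as an asserted calculation). Your sign choice $G(y)=2a^{y}/(1-a^{2y})$ is in fact the one consistent with the singular locus being $x+y=0$ and with the final displayed formula for $T$, so the argument goes through exactly as in the paper.
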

\begin{proof}
Let $T:u=f(x,y),v=g(x,y)$.
Any vertical line $x=c$ in the $xy$ plane is mapped into a line $v=m_c(u-1)$, in the $uv$ plane.
This means $g(c,y)=m_c(f(c,y)-1)$, from which $g(c,y)/(f(c,y)-1)=m_c$.  It follows that $g(x,y)/(f(x,y)-1)$ depends on $x$ only, that is
\begin{equation}\label{ff}
g(x,y)=F(x)\cdot (f(x,y)-1).
\end{equation}
On the other hand, any horizontal $y=c$ is mapped into a line through the origin $v=m_cu$. This can be rewritten as $g(x,c)=m_c\cdot f(x,c)$, from which $g(x,c)/f(x,c)=m_c$, where $m_c$ depends only on $c$. This means that
\begin{equation}\label{gg}
g(x,y)/f(x,y)=G(y) \quad \text{from which} \quad g(x,y)=G(y)\cdot f(x,y).
\end{equation}

Solving (\ref{ff}) and (\ref{gg}), we obtain
\begin{equation} \label{ffgg}
f(x,y)=\frac{F(x)}{F(x)-G(y)},\qquad g(x,y)=\frac{F(x)\cdot G(y)}{F(x)-G(y)}.
\end{equation}

Taking partial derivatives in (\ref{ffgg}), we obtain
\begin{eqnarray*}
f_x(x,y)&=&\frac{-F'(x)\cdot G(y)}{[F(x)-G(y)]^2},\qquad g_x(x,y)=G(y)\cdot f_x(x,y),\\
f_y(x,y)&=&\frac{-F(x)\cdot G'(y)}{[F(x)-G(y)]^2},\qquad g_y(x,y)=F(x)\cdot f_y(x,y).
\end{eqnarray*}

Using the local condition \eqref{fxgx}, we have that
\begin{eqnarray*}
&f^2_x+g^2_x=f^2_y+g^2_y \longrightarrow (f_x(x,y))^2\cdot [1+G^2(y)]=(f_y(x,y))^2\cdot [1+F^2(x)]\rightarrow\\
& \rightarrow (F'(x))^2\cdot (G(y))^2\cdot [1+G^2(y)]=(F(x))^2\cdot (G'(y))^2\cdot [1+F^2(x)]\rightarrow\\
\end{eqnarray*}
\begin{equation*}
\longrightarrow \left (\frac{F'(x)}{F(x)\cdot \sqrt{1+F^2(x)}}\right)^2=\left(\frac{G'(y)}{G(y)\cdot \sqrt{1+G^2(y)}}\right)^2.
\end{equation*}

Since the left hand side of the above equation depends on $x$ only while the right hand side depends on $y$ only,
it follows that we have the following system of differential equations
\begin{equation*}
\frac{F'(x)}{F(x)\cdot \sqrt{1+F^2(x)}}=\pm k;\qquad \frac{G'(y)}{G(y)\cdot \sqrt{1+G^2(y)}}=\pm k.
\end{equation*}

One set of solutions of this system is given by
\begin{equation*}
F(x)=\frac{2a^x}{a^{2x}-1},\qquad G(y)=\frac{2a^{y}}{a^{2y}-1},\qquad {\rm where}\,\, a>1.
\end{equation*}

Substituting these into (\ref{ffgg}) we obtain the desired transformation

\begin{equation}\label{T}
T: \qquad f(x,y)=\frac{a^x(a^{2y}-1)}{(a^x+a^y)(a^{x+y}-1)},\qquad g(x,y)=\frac{2\,a^{x+y}}{(a^x+a^y)(a^{x+y}-1)}.
\end{equation}

\medskip

We still have to verify property (c).
Notice that transformation $T$ is not defined for points along the line $x+y=0$. It is only the squares which
are entirely contained in one of the two open half-planes determined by $x+y=0$ which have the desired property.

Consider the square $ABCD$ in the $xy$ plane with  $ A(x,y),\, B(x+l,y),\, C(x+l,y+l),\, D(x,y+l)$ where $l >0$. Further assume that $x+y>0$ so that the entire square $ABCD$ is above the line $x+y=0$.

Let $A' := T(A),\,\, B':= T(B),\,\, C' := T(C),\,\, D' := T(D)$ and denote $X:=a^x,\,\, Y:=a^y,\,\, L:=a^l$. Obviously, since $a>1$ and $x+y>0$ it immediately follows that $X>0, Y>0,\, XY>1, L>1$.
Thus, the coordinates of these four points can be written as
\begin{align*}
A'&= \left(\frac{X(Y^2-1)}{(X+Y)(XY-1)}, \frac{2XY}{(X+Y)(XY-1)}\right),\\
B'&= \left(\frac{LX(Y^2-1)}{(LX+Y)(LXY-1)}, \frac{2LXY}{(LX+Y)(LXY-1)}\right),\\
C'&= \left(\frac{LX(L^2Y^2-1)}{(LX+LY)(L^2XY-1)}, \frac{2L^2XY}{(LX+LY)(L^2XY-1)}\right),\\
D'&= \left(\frac{X(L^2Y^2-1)}{(X+LY)(LXY-1)}, \frac{2LXY}{(X+LY)(LXY-1)}\right).
\end{align*}
It is easy to verify that $A'B'C'D'$ is a positively oriented quadrilateral.

It remains to check that $A'B'C'D'$ is tangential. Calculate the side lengths of $A'B'C'D'$.
\begin{eqnarray*}
A'B'&=&\frac{XY(L-1)(Y^2+1)(X^2L+1)}{(X+Y)(XY-1)(XL+Y)(XYL-1)},\\
B'C'&=&\frac{XY(L-1)(Y^2L+1)(X^2L^2+1)}{(X+Y)(XL+Y)(XYL-1)(XYL^2-1)},\\
C'D'&=&\frac{XY(L-1)(Y^2L^2+1)(X^2L+1)}{(X+Y)(X+YL)(XYL-1)(XYL^2-1)},\\
D'A'&=&\frac{XY(1+X^2)(L-1)(Y^2L+1)}{(X+Y)(XY-1)(X+YL)(XYL-1).}
\end{eqnarray*}

It is easy to verify that $A'B'+C'D' = B'C'+D'A'$, thus by the Pitot-Steiner theorem it follows that $A'B'C'D'$ is tangential. This proves the theorem.
\end{proof}

As in the trapezoid case, we can show that given any tangential quadrilateral $A'B'C'D'$, an appropriate scaled and/or rotated copy of $A'B'C'D'$ in the $uv$ plane can be obtained as the image under transformation $T$ of a conveniently chosen square in the $xy$ plane. However, in this case the computations are more complicated. We present the details in the following theorem.

\begin{thm}\label{onto}
Let $A'B'C'D'$ be a tangential quadrilateral in the $uv$ plane. Without loss of generality assume that after an eventual scaling and/or rotation we have that
$A'B'\cap C'D' = O(0,\,0)$ and $B'C'\cap D'A' =P(1,\,0)$ as shown in figure \ref{fig8} below. Then for any $a>1$ there exists constants $x$, $y$ and $l$, with $x+y>0$, $l>0$ such that the image of the axes-parallel square of side $l$ with the bottom left corner located at $A(x,\,y)$ under the transformation $T$ defined by \eqref{T} is the quadrilateral $A'B'C'D'$.
\end{thm}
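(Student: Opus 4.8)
The plan is to reduce the entire statement to a matter of matching the slopes of four lines. Since $A'B'$ meets $C'D'$ at the finite point $O$ (and $B'C'$ meets $D'A'$ at the finite point $P$), the quadrilateral $A'B'C'D'$ is not a trapezoid, so it is genuinely outside the scope of Theorem \ref{trapezoidthm}, and we use the normalization of the statement. After that normalization the sides $A'B'$ and $C'D'$ lie on lines through $O(0,0)$ and the sides $B'C'$, $D'A'$ lie on lines through $P(1,0)$; as each vertex is the intersection of two of these four lines, $A'B'C'D'$ is completely determined by the four slopes $m_1:=$ slope of $A'B'$, $m_2:=$ slope of $C'D'$, $n_1:=$ slope of $D'A'$, $n_2:=$ slope of $B'C'$. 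By parts (a) and (b) of Theorem \ref{generalcase}, the image under $T$ of the axes-parallel square of side $l$ with bottom-left corner $A(x,y)$ is again a (positively oriented) quadrilateral with $A'B'$, $C'D'$ on lines through $O$ and $B'C'$, $D'A'$ on lines through $P$; reading off the coordinates of $A',B',C',D'$ from the proof of Theorem \ref{generalcase} (or directly from \eqref{T}), one computes that these four lines have slopes $2a^{y}/(a^{2y}-1)$, $2a^{y+l}/(a^{2(y+l)}-1)$, $-2a^{x}/(a^{2x}-1)$, $-2a^{x+l}/(a^{2(x+l)}-1)$, respectively. Hence it suffices to find $x$, $y$, $l$ with $x+y>0$, $l>0$ solving the $4\times3$ system obtained by equating these four expressions to $m_1,m_2,n_1,n_2$; once the slopes agree, the four supporting lines coincide with those of $A'B'C'D'$, hence so do the vertices, and $T$ maps the square onto $A'B'C'D'$.

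To solve the system, set $\alpha:=\ln a>0$ and use the identity $\dfrac{2t}{t^{2}-1}=\dfrac{1}{\sinh(\ln t)}$ to rewrite the four equations as
\[ \sinh(\alpha y)=\frac{1}{m_1},\qquad \sinh\big(\alpha(y+l)\big)=\frac{1}{m_2},\qquad \sinh(\alpha x)=-\frac{1}{n_1},\qquad \sinh\big(\alpha(x+l)\big)=-\frac{1}{n_2}. \]
As $\sinh\colon\mathbb{R}\to\mathbb{R}$ is a bijection, the first and third equations determine $y$ and $x$, while the second and fourth produce two expressions for $l$; the system is thus solvable — for every $a>1$, since $\alpha$ cancels out — exactly when
\[ \operatorname{arcsinh}(1/m_1)+\operatorname{arcsinh}(1/n_1)=\operatorname{arcsinh}(1/m_2)+\operatorname{arcsinh}(1/n_2). \]
Equivalently, writing $\theta_{AB},\theta_{CD},\theta_{DA},\theta_{BC}\in(0,\pi)$ for the inclinations of the four side lines and using $\operatorname{arcsinh}(\cot\theta)=\ln\cot(\theta/2)$, the compatibility condition is $\cot\tfrac{\theta_{AB}}{2}\cot\tfrac{\theta_{DA}}{2}=\cot\tfrac{\theta_{CD}}{2}\cot\tfrac{\theta_{BC}}{2}$, i.e. the product of the cotangents of the half-inclinations of the two sides at $A'$ equals that at $C'$.

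The hard part will be to show that this compatibility condition follows from the Pitot--Steiner equality $A'B'+C'D'=B'C'+D'A'$ for the quadrilateral $A'B'C'D'$; the converse implication is free, since it is precisely the assertion of Theorem \ref{generalcase}(c) that the image of a square always is tangential. I would establish this by explicit computation: write each vertex as the intersection of its two supporting lines — e.g. $A'=\frac{1}{n_1-m_1}(n_1,\ m_1 n_1)$, and similarly $B'$, $C'$, $D'$ — express the four side lengths of $A'B'C'D'$ as functions of $m_1,m_2,n_1,n_2$, form the Pitot expression $A'B'+C'D'-B'C'-D'A'$, and check that, after rationalizing the radicals $\sqrt{1+m_i^{2}}$, $\sqrt{1+n_i^{2}}$ (that is, after passing to the variables $a^{x}=X$, $a^{y}=Y$, $a^{l}=L$ via $\sinh$), it becomes a nonzero multiple of the compatibility relation, so that the two vanish together. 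This is the longest and least transparent step — ``easy but tedious,'' in the style of the side-length verifications in Theorems \ref{trapezoidthm} and \ref{generalcase} — and it is the step I expect to be the main obstacle.

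It remains to arrange the inequalities $x+y>0$ and $l>0$. In the variables $X:=a^{x}$, $Y:=a^{y}$, $L:=a^{l}$ these read $XY>1$ and $L>1$, i.e. $1/m_1>1/n_1$ and $1/m_2>1/m_1$ (hence also $1/n_1>1/n_2$), i.e. $\theta_{CD}<\theta_{AB}<\theta_{DA}<\theta_{BC}$. This ordering of the four inclinations is forced by the convexity and positive orientation of $A'B'C'D'$ together with the locations of its diagonal points $O$ and $P$ — possibly after relabeling the vertices so that the cyclic orientation agrees with the one produced by $T$, equivalently after interchanging the roles of $O$ and $P$ in the normalization — and is confirmed by a short case analysis. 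Granting this, the square of side $l=\tfrac{1}{\alpha}\big(\operatorname{arcsinh}(1/m_2)-\operatorname{arcsinh}(1/m_1)\big)>0$ with bottom-left corner $A\big(-\tfrac{1}{\alpha}\operatorname{arcsinh}(1/n_1),\ \tfrac{1}{\alpha}\operatorname{arcsinh}(1/m_1)\big)$ lies in the half-plane $x+y>0$, and by the first paragraph its image under $T$ is the quadrilateral $A'B'C'D'$. This proves the theorem.
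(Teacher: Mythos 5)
Your reduction of the theorem to the slope-matching system, and the hyperbolic-sine linearization $\sinh(\alpha y)=1/m_1$, $\sinh(\alpha(y+l))=1/m_2$, $\sinh(\alpha x)=-1/n_1$, $\sinh(\alpha(x+l))=-1/n_2$, is correct (your four slopes do agree with what \eqref{T} produces) and is genuinely different from the paper's route, which parametrizes the target by the half-angle tangents $t_i$, inverts the system \eqref{tangents} through the explicit quadratics \eqref{eqX}--\eqref{eqL}, and verifies the result by computer algebra. But your argument has a gap exactly where the theorem lives: the implication ``$A'B'C'D'$ tangential $\Rightarrow$ the compatibility condition $\operatorname{arcsinh}(1/m_1)+\operatorname{arcsinh}(1/n_1)=\operatorname{arcsinh}(1/m_2)+\operatorname{arcsinh}(1/n_2)$'' is only announced (``I would establish this by explicit computation''), not carried out. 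The direction you do get for free --- compatibility implies Pitot --- is just Theorem \ref{generalcase}(c) restated, since by construction the compatibility locus \emph{is} the set of images of squares. The missing direction is not a formality: after rationalizing the radicals $\sqrt{1+m_i^2}$, $\sqrt{1+n_i^2}$, the Pitot relation lands in an algebraic set that also contains the sign-variant relations $\pm A'B'\pm C'D'\pm B'C'\pm D'A'=0$ (tangency to excircles of the triangle cut out by three of the side lines, rather than to the incircle), and your compatibility locus is a priori only one codimension-one branch inside that union. Identifying the correct branch is real work --- essentially the same work the paper spends on deriving and MAPLE-checking the explicit roots of \eqref{eqX}--\eqref{eqL} --- and it is absent.

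The second gap is the ordering claim $\theta_{CD}<\theta_{AB}<\theta_{DA}<\theta_{BC}$, which you need to conclude $XY>1$ and $L>1$, i.e.\ that the preimage square is nondegenerate and avoids the singular line $x+y=0$. You assert this is ``forced by convexity and positive orientation\dots confirmed by a short case analysis'' but supply neither; note also that Theorem \ref{onto} fixes which diagonal point is $O$ and which is $P$, so ``interchanging the roles of $O$ and $P$'' is not freely available inside this statement. The paper treats precisely this point as the substantive half of the proof: it derives the positivity facts \eqref{positives} from the angle sums of a tangential quadrilateral and then gets $XY>1$, $L>1$ from $a+b>0$, $c>2$ via the monotonicity of $x\mapsto x+\sqrt{x^2+4}$. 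Your outline needs an argument of comparable substance at both of these points; as written it is a correct strategy with its two load-bearing steps left as claims.
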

\begin{figure}[!htb]
\centering
\includegraphics[scale=.55]{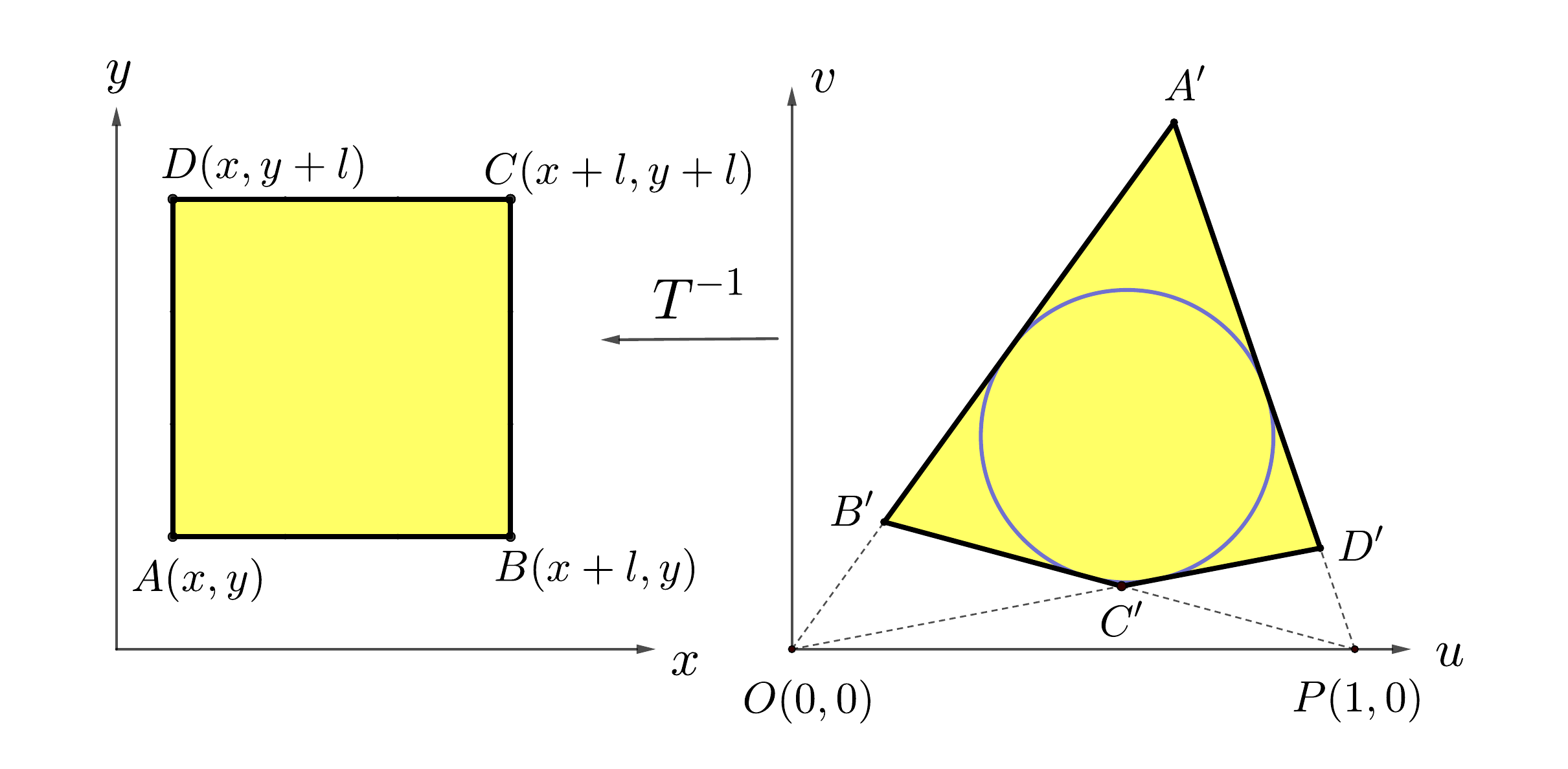}
\vspace{-0.5cm}
\caption{\small Proving that the transformation $T$ is surjective: every appropriately scaled/rotated tangential quadrilateral is the image of some axes-parallel square.}
\label{fig8}
\end{figure}
\begin{proof}
Denote by $t_1=\tan(\angle A'/2)$, $t_2=\tan (\angle B'/2)$, $t_3=\tan(\angle C'/2)$ and $t_4=\tan(\angle D'/2)$.
Obviously, $t_i>0$ for every $1\le i\le 4$. Since the angles of a quadrilateral sum up to $2\pi$ it follows easily
that the quantities $t_i$ satisfy the following identity
\begin{equation}
t_1+t_2+t_3+t_4= t_1\,t_2\,t_3+t_1\,t_2\,t_4+t_1\,t_3\,t_4+t_2\,t_3\,t_4.
\end{equation}

From our choice of $A'B'C'D'$ we have that $\angle A' +\angle B' <\pi$ and therefore $t_1\,t_2<1$. Similarly, since
$\angle A'+\angle D'<\pi$, it follows that $t_1\,t_4<1$.

We also need the following identity
\begin{equation}\label{identity}
t_1\,t_2+t_1\,t_4+t_2\,t_4-1= \frac{\cos(\angle C'/2)}{\cos(\angle A'/2)\,\cos(\angle B'/2)\,\cos(\angle D'/2)}.
\end{equation}
In particular, it follows that $t_1\,t_2+t_1\,t_4+t_2\,t_4-1>0$.
Hence,
\begin{equation}\label{positives}
1-t_1t_2>0,\,\, 1-t_1t_4>0,\,\, t_1\,t_2+t_1\,t_4+t_2\,t_4-1>0.
\end{equation}

Assume now that $ABCD$ is the axes-parallel square which is the desired pre-image of $A'B'C'D'$. Assume that this square has side $l>0$ with $A(x,\,y)$, $B(x+l,\,y)$, $C(x+l,\,y+l), D(x,\,y+l)$, and $x+y>0$.

We thus want to find $x, y$ and $l$ such that $A'=T(A)$, $B'=T(B)$, $C'=T(C)$, $D'=T(D)$. Denote $a^x=X$, $a^y=Y$ and $a^l=L$. Since $a>1$ the inequalities $x+y>0$ and
$l>0$ are equivalent to proving that $XY>1$ and $L>1$. Straightforward computations lead to the following equalities
\begin{align}\label{tangents}
\tan(\angle A'/2)&=t_1=\frac{X\,Y-1}{X+Y},\qquad\,\,\,\,\, \tan(\angle B'/2)=t_2=\frac{X\,L+Y}{XYL-1},\nonumber\\
\tan(\angle C'/2)&=t_3=\frac{XYL^2-1}{L(X+Y)},\qquad\tan(\angle D'/2)=t_4=\frac{X+YL}{XYL-1}.
\end{align}

After taking resultants of the expressions for $t_1$, $t_2$ and $t_4$ in \eqref{tangents} it follows that $X$, $Y$ and $L$ satisfy the following quadratic equations
\begin{align}
X^2 -\frac{2t_1+t_2-t_4-t_1^2(t_2+t_4)}{1-t_1\,t_2}\,X-1=0,\label{eqX}\\
Y^2 -\frac{2t_1-t_2+t_4-t_1^2(t_2+t_4)}{1-t_1\,t_4}\,Y-1=0,\label{eqY}\\
L^2 -\frac{(t_1\,t_2+t_1\,t_4-1)^2 +t_2^2+t_4^2+1}{t_1\,t_2+t_1\,t_4+t_2\,t_4-1}\,L +1=0.\label{eqL}
\end{align}
These equations are well defined as we earlier proved that $1-t_1\,t_2>0$, $1-t_1\,t_4>0$ and $t_1\,t_2+t_1\,t_4+t_2\,t_4-1>0$.
Let $X$, $Y$ and $L$ be the larger roots of \eqref {eqX}, \eqref {eqY} and \eqref {eqL}, respectively.

In particular,
\begin{align*}
X&=\frac{2t_1+t_2-t_4-t_1^2(t_2+t_4)+\sqrt{(1+t_1^2)\left[(t_1\,t_2+t_1\,t_4-2)^2+(t_2-t_4)^2\right]}}{2(1-t_1\,t_2)}\\
Y&=\frac{2t_1-t_2+t_4-t_1^2(t_2+t_4)+\sqrt{(1+t_1^2)\left[(t_1\,t_2+t_1\,t_4-2)^2+(t_2-t_4)^2\right]}}{2(1-t_1\,t_4)}\\
L&=\frac{(t_1\,t_2+t_1\,t_4-1)^2+t_2^2+t_4^2+1+(t_2+t_4)\sqrt{(1+t_1^2)\left[(t_1\,t_2+t_1\,t_4-2)^2+(t_2-t_4)^2\right]}}{2(t_1\,t_2+t_1\,t_4+t_2\,t_4-1)}.
\end{align*}
Ensuring that these choices for $X$, $Y$ and $L$ satisfy the system \eqref{tangents} is just a matter of algebraic calculation; for the value of $t_3$ one can use
identity \eqref{identity}. Such verifications can be easily performed using any computer algebra software (for instance we checked them using MAPLE).

It remains to prove that for these choices of $X, Y$ and $L$ we do have that $XY>1$ and $L>1$.

\noindent Let us introduce the following notations
\begin{align}
&a:=\frac{2t_1+t_2-t_4-t_1^2(t_2+t_4)}{1-t_1t_2},\label{a}\\
&b:=\frac{2t_1-t_2+t_4-t_1^2(t_2+t_4)}{1-t_1t_4},\label{b}\\
&c:=\frac{(t_1t_2+t_!t_4-1)^2+t_2^2+t_4^2+1}{t_1t_2+t_1t_4+t_2t_4-1}.\label{c}
\end{align}

The numbers $a$, $b$ and $c$ above are the coefficients of the linear terms in the quadratics \eqref{eqX}, \eqref{eqY} and \eqref{eqL}.
Hence, $X$, $Y$ and $L$ are the larger roots of the equations
\begin{equation}
X^2-aX-1=0,\quad Y^2-bY-1=0,\quad L^2-cL+1=0,\quad \text{that is,}
\end{equation}
\begin{equation}
X=\frac{a+\sqrt{a^2+4}}{2},\quad Y=\frac{b+\sqrt{b^2+4}}{2},\quad L=\frac{c+\sqrt{c^2-4}}{2}.
\end{equation}

Quick computations show that
\begin{equation*}\label{ab}
a+b=\frac{t_1\left((2-t_1t_2-t_1t_4)^2+(t_2-t_4)^2\right)}{(1-t_1t_2)(1-t_1t_4)},\,\, c=2+\frac{(2-t_1t_2-t_1t_4)^2+(t_2-t_4)^2}{(t_1t_2+t_1t_4+t_2t_4-1)}.
\end{equation*}

Combining the above equalities with \eqref{positives}, it follows that $a+b>0$ and $c>2$.

Then $a>-b$ and since the function $h(x)=x+\sqrt{x^2+4}$ is strictly increasing on $\mathbb{R}$ we have that $h(a)>h(-b)$ from which
\begin{equation*}
a+\sqrt{a^2+4}>-b+\sqrt{b^2+4},\quad \text{and after multiplying both sides by}\, b+\sqrt{b^2+4}
\end{equation*}
\begin{equation*}
(a+\sqrt{a^2+4})(b+\sqrt{b^2+4}) >4 \longrightarrow \frac{a+\sqrt{a^2+4}}{2}\cdot  \frac{b+\sqrt{b^2+4}}{2} >1\longrightarrow XY>1.
\end{equation*}
On the other hand, since $c>2$ it clearly follows that $L=(c+\sqrt{c^2-4})/2>1$, as claimed.

We thus proved that $XY>1$ and $L>1$ as needed. This completes the proof of the main result.
\end{proof}

It can be noticed that transformation $T$ given by \eqref {T} proves slightly more than that any given tangential quadrilateral
has an $n\times n$ class preserving grid dissection for every $n\ge 2$.
\begin{cor}
For any given tangential quadrilateral $Q$ and any dissection of an axes-parallel square into smaller squares, there exists a
topologically equivalent dissection of $Q$ into smaller tangential quadrilaterals.
\end{cor}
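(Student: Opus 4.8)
The plan is to regard the transformation $T$ from Theorems \ref{generalcase} and \ref{trapezoidthm} as a global change of coordinates that transports an arbitrary ``squared square'' pattern onto $Q$. Let $\mathcal{D}$ be the given dissection of an axes-parallel square into smaller squares; after a preliminary rescaling we may assume $\mathcal{D}$ is a dissection of $[0,1]^2$. Choose the appropriate transformation $T$: the one from Theorem \ref{trapezoidthm} if $Q$ is a trapezoid, and the one from \eqref{T} (Theorem \ref{generalcase}) otherwise. Fix any $a>1$. By Theorem \ref{onto} (or, in the trapezoid case, by part (d) of Theorem \ref{trapezoidthm}) there are constants $x_0,y_0,l$ with $x_0+y_0>0$ and $l>0$ such that $T$ maps the axes-parallel square $S=[x_0,x_0+l]\times[y_0,y_0+l]$ onto a copy $Q'$ of $Q$ obtained by a scaling and/or rotation, hence still tangential. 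Since the bottom-left corner of $S$ already satisfies $x_0+y_0>0$, the whole closed square $\overline S$ lies in the open half-plane $\{x+y>0\}$, on which $T$ is smooth.

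Next I would transport the combinatorics. Applying the similarity $\sigma(s,t)=(x_0+ls,\,y_0+lt)$ to $\mathcal{D}$ produces a dissection $\mathcal{D}'$ of $S$ into smaller axes-parallel squares, all lying in $\{x+y>0\}$. By property (c) of the chosen transformation, $T$ sends each of these small squares to a tangential quadrilateral. By properties (a) and (b), $T$ carries horizontal edges to segments of lines through $O(0,0)$ and vertical edges to segments of lines through $P(1,0)$ (respectively to vertical segments, in the trapezoid case); in particular the image of a small square is a genuine tangential quadrilateral bounded by four segments, and the image of a shared edge of $\mathcal{D}'$ is a segment shared by the two corresponding image quadrilaterals. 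Thus, as soon as we know $T$ is injective on $\{x+y>0\}$, the images of the pieces of $\mathcal{D}'$ form a polygonal dissection of $T(S)=Q'$ whose face/edge/vertex incidences match those of $\mathcal{D}'$, hence of $\mathcal{D}$.

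The injectivity of $T$ on $\{x+y>0\}$ is the only point not already contained in the earlier results, so that is the step I expect to require the most care — though it turns out to be elementary. Writing $X=a^x$, $Y=a^y$, the explicit formulas \eqref{T} give $v/u=2Y/(Y^2-1)$ and $v/(u-1)=2X/(1-X^2)$, so $Y$ and $X$ are the unique \emph{positive} roots of $Y^2-(2u/v)\,Y-1=0$ and $X^2+\bigl(2(u-1)/v\bigr)X-1=0$ respectively (each quadratic has root-product $-1$), and $v>0$ throughout $T(\{x+y>0\})$ because there $X,Y>0$ and $XY>1$; the trapezoid transformation \eqref{Ttrapezoid} is even more transparent, with $u=X$ and $v/u=(Y-Y^{-1})/2$. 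Hence $T$ is injective on $\{x+y>0\}$, so $T|_{\overline S}$ is a homeomorphism of $\overline S$ onto $\overline{Q'}$ carrying $\mathcal{D}'$ to a dissection of $Q'$ into tangential quadrilaterals combinatorially isomorphic to $\mathcal{D}$. Undoing the similarity that relates $Q'$ to $Q$ — which preserves both tangentiality and the combinatorial type — yields a dissection of $Q$ into smaller tangential quadrilaterals topologically equivalent to $\mathcal{D}$, as required.
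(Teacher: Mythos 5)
Your proof is correct and follows the same route the paper intends: transporting the given squared-square pattern through the transformation $T$ onto the pre-image square supplied by Theorem \ref{onto} (or Theorem \ref{trapezoidthm}(d)). The paper leaves the argument implicit, so your explicit verification that $T$ is injective on $\{x+y>0\}$ (via the unique positive roots of the two quadratics in $X$ and $Y$) is a genuine and worthwhile addition rather than a deviation.
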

\end{section}

\begin{section}{\bf Three nice properties of tangential quadrilaterals}

There are three natural ``centers'' associated to every given tangential quadrilateral $A'B'C'D'$. First, we have $I$, the incenter of $A'B'C'D'$. Second, we have $S$,
the point of intersection of the diagonals $A'C'$ and $B'D'$. Third, there is $W$, the {\it $2\times\ 2$ center of $A'B'C'D'$}, the common point of the four smaller tangential
quadrilaterals which determine the $2\times 2$ grid dissection of $A'B'C'D'$ - see figure \ref{fig9}. One very interesting fact is that these points are always collinear.

\begin{thm}Let $A'B'C'D'$ be a tangential quadrilateral placed such that
$A'B'\cap C'D' = O(0,\,0)$ and $A'D'\cap B'C' =P(1,\,0)$ as shown in figure \ref{fig9}.

a) Let $I$, $S$ and $W$ be the three centers defined above. Then these points are collinear and the line determined by them is perpendicular to $OP$.

b) Denote the radii of the inscribed circles of the four small tangential quadrilaterals $A'B'C'D'$ is dissected into by $r_1$, $r_2$, $r_3$ and $r_4$, respectively.
Then
\begin{equation}\label{reciprocals}
\frac{1}{r_1}+\frac{1}{r_3}=\frac{1}{r_2}+\frac{1}{r_4}.
\end{equation}
\end{thm}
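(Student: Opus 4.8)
The plan is to exploit the transformation $T$ from \eqref{T} together with the explicit coordinates of $A'$, $B'$, $C'$, $D'$ already computed in the proof of Theorem~\ref{generalcase}. By Theorem~\ref{onto} we may assume $A'B'C'D'$ is the image under $T$ of an axes-parallel square $ABCD$ with corners $A(x,y)$, $B(x+l,y)$, $C(x+l,y+l)$, $D(x,y+l)$, and we write $X=a^x$, $Y=a^y$, $L=a^l$ as before. The crucial observation is that the $2\times 2$ grid dissection of $A'B'C'D'$ is exactly the image under $T$ of the four congruent subsquares of $ABCD$ obtained by the horizontal line $y' = y + l/2$ and the vertical line $x' = x + l/2$; since $T$ sends horizontal lines to lines through $O$ and vertical lines to lines through $P$, this image really is a $2\times 2$ grid dissection in the sense of the paper, and the point $W$ is the image of the center $(x+l/2,\,y+l/2)$ of $ABCD$. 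Setting $M := a^{l/2} = \sqrt{L}$, the point $W$ is then $T(x+l/2,y+l/2)$, whose coordinates are obtained from the formula for $A'$ by the substitutions $X \mapsto MX$, $Y \mapsto MY$.

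For part~(a), I would compute the three points explicitly. The intersection point $S$ of the diagonals is found by intersecting line $A'C'$ with line $B'D'$ using the coordinates displayed after \eqref{T}; the point $W$ is $T(x+l/2, y+l/2)$ as above; and the incenter $I$ of a tangential quadrilateral is the intersection of its two angle bisectors, which, since $A'B'$ and $C'D'$ meet at $O$ while $A'D'$ and $B'C'$ meet at $P$, are the bisector at $O$ of the angle $\angle(OA',OD')$ (equivalently the locus equidistant from lines $A'B'$ and $C'D'$) and the bisector at $P$. The claim ``collinear, and the line is perpendicular to $OP$'' is equivalent to the single statement that $I$, $S$, $W$ all have the \emph{same $u$-coordinate}. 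So the whole of part~(a) reduces to showing $u_I = u_S = u_W$, i.e. that each of these $u$-coordinates equals one common rational function of $X,Y,L$. I expect each of the three computations to collapse to something symmetric; the natural guess, by the $O\leftrightarrow P$ symmetry of the configuration and the form of $T$, is that this common $u$-coordinate is $\tfrac12$ after a suitable normalization, or more precisely a clean expression such as $\dfrac{XY L - 1}{(X+Y)(something)}$ — but the exact closed form is to be extracted from the algebra. This is routine but tedious and is best verified with a computer algebra system, exactly as the authors did for Theorem~\ref{onto}.

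For part~(b), the four small tangential quadrilaterals are themselves images under $T$ of axes-parallel squares (the four subsquares of $ABCD$), so each has side lengths governed by the formulas for $A'B', B'C', C'D', D'A'$ displayed in the proof of Theorem~\ref{generalcase}, with $X,Y,L$ replaced by the appropriate values: for the bottom-left subsquare one uses $(X,Y,M)$; for the bottom-right, $(MX, Y, M)$; for the top-right, $(MX, MY, M)$; for the top-left, $(X, MY, M)$, where $M=\sqrt L$. The inradius of a tangential quadrilateral equals its area divided by its semiperimeter, and both area and semiperimeter are explicit rational/radical functions of $X,Y,M$; in fact it is cleaner to use that for a quadrilateral with incircle of radius $r$ one has $r = \text{Area}/s$ with $s$ the semiperimeter, and to compute $1/r_i = s_i/\text{Area}_i$ for each $i$. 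Then the identity \eqref{reciprocals} becomes a polynomial identity in $X, Y, M$ to be checked directly.

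\textbf{Main obstacle.} The conceptual content — that everything is the image of a square under $T$, so $W$ is the image of the square's center and the four small quadrilaterals are images of the four subsquares — is short. The real work, and the only genuine obstacle, is the algebraic simplification: showing that the three $u$-coordinates in part~(a) coincide, and that the four reciprocal inradii in part~(b) satisfy \eqref{reciprocals}. Both are identities in three positive real parameters $X, Y, L$ (with $XY>1$, $L>1$), provable in principle by clearing denominators and expanding, but the expressions are large enough that doing it by hand is error-prone; I would organize the computation by first writing the incenter as the intersection of the two bisector lines through $O$ and $P$ (whose slopes involve only the angles, hence only $t_1,\dots,t_4$, which by \eqref{tangents} are rational in $X,Y,L$), which should keep the $u_I$ computation manageable, and then confirm the final polynomial identities with MAPLE as the authors do elsewhere.
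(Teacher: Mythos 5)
Your proposal is correct and takes essentially the same route as the paper: the authors likewise realize $A'B'C'D'$ as $T(\text{square})$ via Theorems \ref{generalcase} and \ref{onto}, take $W$ to be the image of the square's center, reduce part (a) to the single identity $u_I=u_S=u_W$ (they find the common abscissa to be $\frac{X(L^2Y^2-1)}{(X+Y)(XYL^2-1)}$ in their parametrization, rather than anything as clean as $\tfrac12$), and verify part (b) by writing each $r_i$ explicitly as a rational function of $X,Y,L$ coming from the image of the corresponding subsquare, leaving the final simplifications to routine (CAS-assisted) algebra exactly as you suggest.
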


\begin{proof}
\vspace{-0.8cm}
\begin{figure}[!htb]
\centering
\includegraphics[scale=0.92]{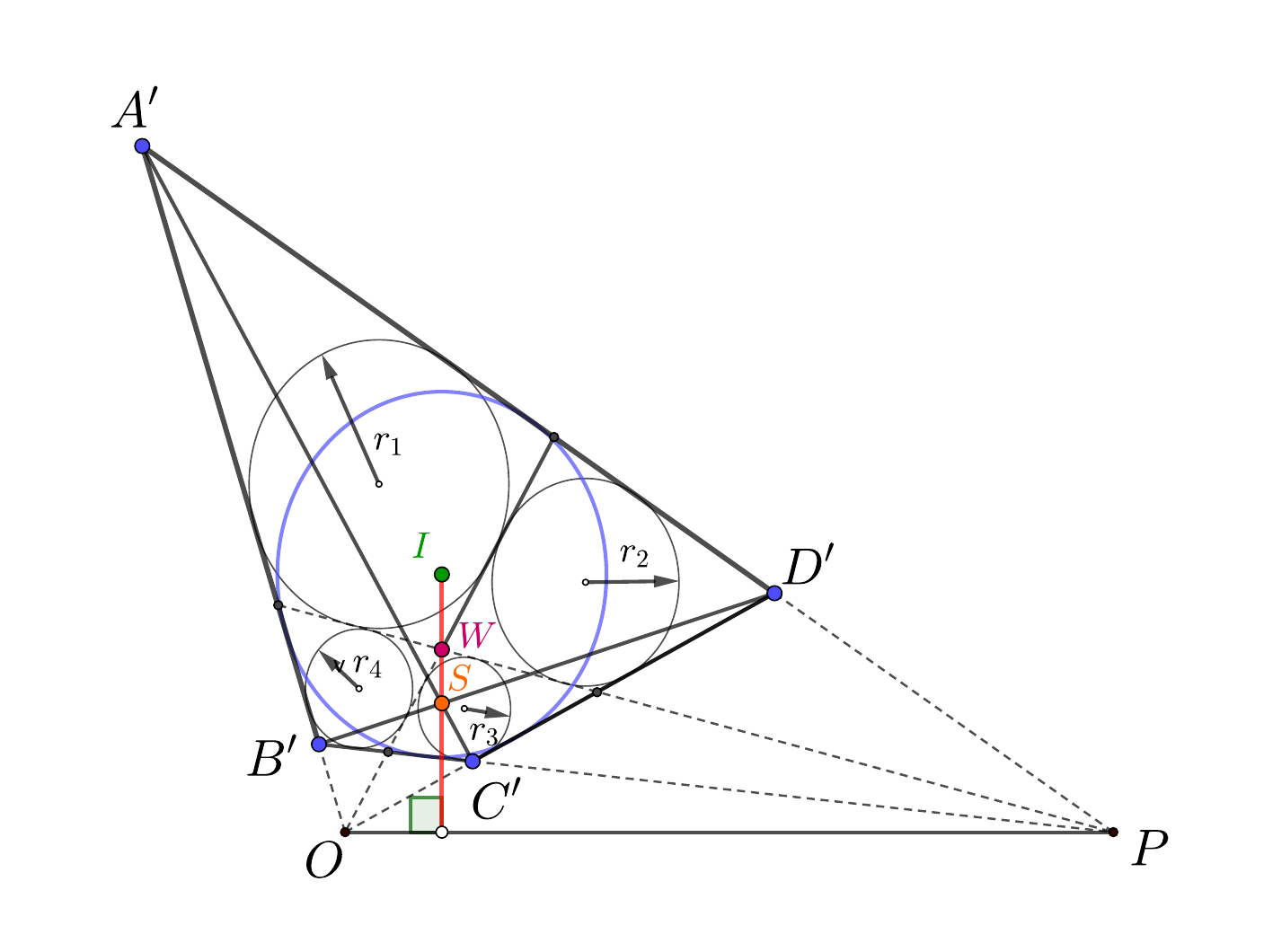}
\vspace{-0.7cm}
\caption{\small The incenter $I$, the $2\times 2$-center $W$,  and the point of intersection of the diagonals $S$, are collinear in any tangential quadrilateral}
\label{fig9}
\end{figure}

From theorems \ref {generalcase} and \ref {onto} it follows that there exists an axes-parallel square, $ABCD$ with  $ A(x,y),\, B(x+l,y),\, C(x+l,y+l),\, D(x,y+l)$, $a>1$, $l >0$, $x+y>0$ and a transformation $T$ given by \eqref{T} such that $T(ABCD)=A'B'C'D'$.
As  before, denote $X:=a^x,\,\, Y:=a^y,\,\, L:=a^{l}$. From our choices it follows that, $X>0, Y>0,\, XY>1$ and  $L>1$.

We just have to compute the coordinates of the points $I$, $S$ and $W$. Straightforward calculations show that all these points have the same abscissa
\begin{equation*}
u_I=u_S=u_W=\frac{X(L^2Y^2-1)}{(X+Y)(XYL^2-1)}, \quad \text{which proves the first assertion}.
\end{equation*}
For the second part, similar computations give the expressions of $r_i$, for $1\le i\le 4$.
\begin{align*}
r_1&= \frac{XY(L-1)}{(X+Y)(XYL-1)},\,\,\qquad r_2= \frac{XYL(L-1)}{(XL+Y)(XYL^2-1)},\\
r_3&= \frac{XYL(L-1)}{(X+YL)(XYL^2-1)}\qquad r_4= \frac{XYL^2(L-1)}{(X+Y)L(XYL^3-1)}.
\end{align*}

Equality \eqref {reciprocals} follows. It is interesting to mention that a similar relation is satisfied by the inradii of triangles $A'SB'$, $B'SC'$, $C'SD'$ and $D'SA'$.
This was proved in \cite{Wu} to be a necessary and sufficient condition for a quadrilateral to be tangential.
\begin{equation}
\frac{1}{r_{A'SB'}}+\frac{1}{r_{C'SD'}}=\frac{1}{r_{B'SC'}}+\frac{1}{r_{D'SA'}}.
\end{equation}
\end{proof}

Another property an $n\times n$ class preserving grid dissection has is what we call the
\emph{triple-grid property}. The situation is displayed below for the case $n=3$.
\begin{figure}[!htb]
\centering
\includegraphics[scale=1]{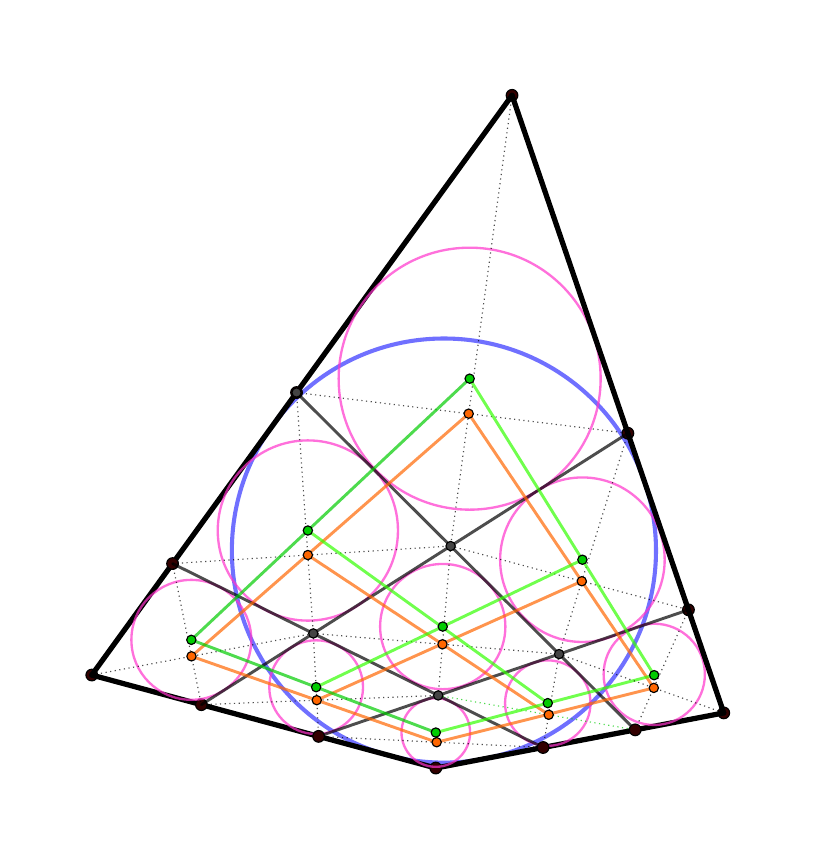}
\vspace{-1cm}
\caption{\small The triple grid property: both incenters and the points of intersection of the diagonals form an $n\times n$ grid.}
\label{fig10}
\end{figure}

The green points represent the incenters of the nine smaller quadrilaterals while the red points represent the intersections
of the diagonals within each of these nine quadrilaterals. It can be easily seen that the incenters as well as the
intersection points of the diagonals create a ``grid-like'' pattern. A third grid, not pictured in figure \ref{fig10}
in order to maintain clarity, is created by the $2\times 2$ centers of these nine quadrilaterals. The general proof is just a matter
of algebraic verification.
\end{section}

\end{document}